\newtheorem{theorem}{Theorem}
\newtheorem{proposition}{Proposition}
\newtheorem{corollary}{Corollary}
\newtheorem{lemma}{Lemma}
\begin{document}
\title{Information Geometry of Wasserstein Statistics \\on Shapes and Affine Deformations}
\author{Shun-ichi Amari \\
	Teikyo University, Advanced Comprehensive  Research Organization, \\
	RIKEN Center for Brain Science \and
	Takeru Matsuda \\
	The University of Tokyo, \\
	RIKEN Center for Brain Science}
\date{}
\maketitle

\begin{abstract}
Information geometry and Wasserstein geometry are two main structures introduced in a manifold of probability distributions, and they capture its different characteristics. 
We study characteristics of Wasserstein geometry in the framework of \cite{LZ2019} for the affine deformation statistical model, which is a multi-dimensional generalization of the location-scale model.
We compare merits and demerits of estimators based on information geometry and Wasserstein geometry. 
The shape of a probability distribution and its affine deformation are separated in the Wasserstein geometry, showing its robustness against the waveform perturbation in exchange for the loss in Fisher efficiency. 
We show that the Wasserstein estimator is the moment estimator in the case of the elliptically symmetric affine deformation model.  
It coincides with the information-geometrical estimator (maximum-likelihood estimator) when the waveform is Gaussian.  
The role of the Wasserstein efficiency is elucidated in terms of robustness against waveform change.  
\end{abstract}

	\section{Introduction}
\label{sec:1}

We study a probability distribution $p({\bm{x}})$ over ${\bm{x}} \in X={\bm{R}}^d$, by using both information geometry (see \citet{Amari2016,Ay2017} etc.) and Wasserstein geometry (see \citet{Villani2003,PC2019,Santambrogio2015} etc.). When $d=2$, $p({\bm{x}})$ can be regarded as a visual pattern on ${\bm{R}}^2$.  

There are lots of applications of Wasserstein geometry to statistics (see e.g. \citet{AM2022,BJGR2019,yatracos2022,BBR2006,Matsuda2021, IOH2022, LM2020, Chen}), machine learning (see e.g. \citet{ACB2917,FZMAP2015,WL2019,PC2019,MMC2015}) and statistical physics (\citet{ito2023}).  
We also recommend the review paper and book by \citet{PZ2019, PZ2022}, which include lots of references.
However, applications to statistical inference look still premature.  

The affine deformation statistical model $p({\bm{x}}, {\bm{\theta}})$ is defined as \begin{equation}
	\label{eq:am220230131}
	p({\bm{x}}, {\bm{\theta}}) = |\Lambda|
	f \left( \Lambda ({\bm{x}}-{\bm{\mu}}) \right),
\end{equation}
where $f({\bm{z}})$ is a standard shape distribution satisfying
\begin{eqnarray}
	\label{eq:am2}
	\int f ({\bm{z}})d{\bm{z}} &=& 1, \\
	\label{eq:am3}
	\int {\bm{z}} f({\bm{z}})d{\bm{z}} &=& 0, \\
	\label{eq:am4}
	\int {\bm{z}}{\bm{z}}^{\top} f({\bm{z}}) d{\bm{z}} &=& I,
\end{eqnarray}              
and $I$ is the identity matrix.  
We also refer to the standard shape  $f$ as a ``waveform" in the following.
The deformation parameter consists of ${\bm{\theta}}=\left( {\bm{\mu}}, \Lambda \right) \in \Theta$ such that ${\bm{\mu}}$ is a vector specifying translation of the location and $\Lambda$ is a non-singular matrix representing scale changes and rotations of ${\bm{x}}$.
Given a standard $f$, we have a statistical model parameterized by ${\bm{\theta}}$: $\mathcal{M}_f = \left\{ p ({\bm{x}}, {\bm{\theta}}) \right\}$.
Geometrically, it forms a finite-dimensional statistical manifold, where ${\bm{\theta}}$ plays the role of a coordinate system.  
Note that it is not necessarily identifiable in general. The identifiability depends on $f$.
The deformation model is a generalization of the location-scale model. 
Note that this model is often called the location-scatter model in several fields such as statistics and signal processing \citep{tyler1987distribution,ollila2014regularized}.
Let $T_{\bm{\theta}}$ denote the affine deformation from ${\bm{x}}$ to ${\bm{z}}$ given by
\begin{equation*}
	{\bm{z}} = T_{\bm{\theta}}{\bm{x}} = \Lambda ({\bm{x}}-{\bm{\mu}}).
\end{equation*}

Let ${\mathcal{F}}=\{p({\bm{x}})\}$ be the space of all smooth positive probability density functions that have finite second moments.
Let ${\mathcal{F}}_S=\{f({\bm{z}})\}$ be its subspace consisting of all the standard distributions $f({\bm{z}})$ satisfying \eqref{eq:am3} and \eqref{eq:am4}.  
Then, any $q({\bm{x}}) \in {\mathcal{F}}$ is written in the form
\begin{equation*}
	q({\bm{x}}) = |\Lambda| f \left( \Lambda ({\bm{x}}-{\bm{\mu}}) \right)
\end{equation*}
for $f \in {\mathcal{F}}_S$ and ${\bm{\theta}}=( {\bm{\mu}}, \Lambda) \in \Theta$.
Note that $\bm{\theta}$ is not necessarily unique due to possible symmetries in $f$.
Hence, $\mathcal{F} = \mathcal{F}_S \times \Theta / \sim$, where $\sim$ is the equivalence relation of equality in distribution.
See Figure~\ref{fig1}. 

\begin{figure}
	\centering
	\includegraphics[width=8cm]{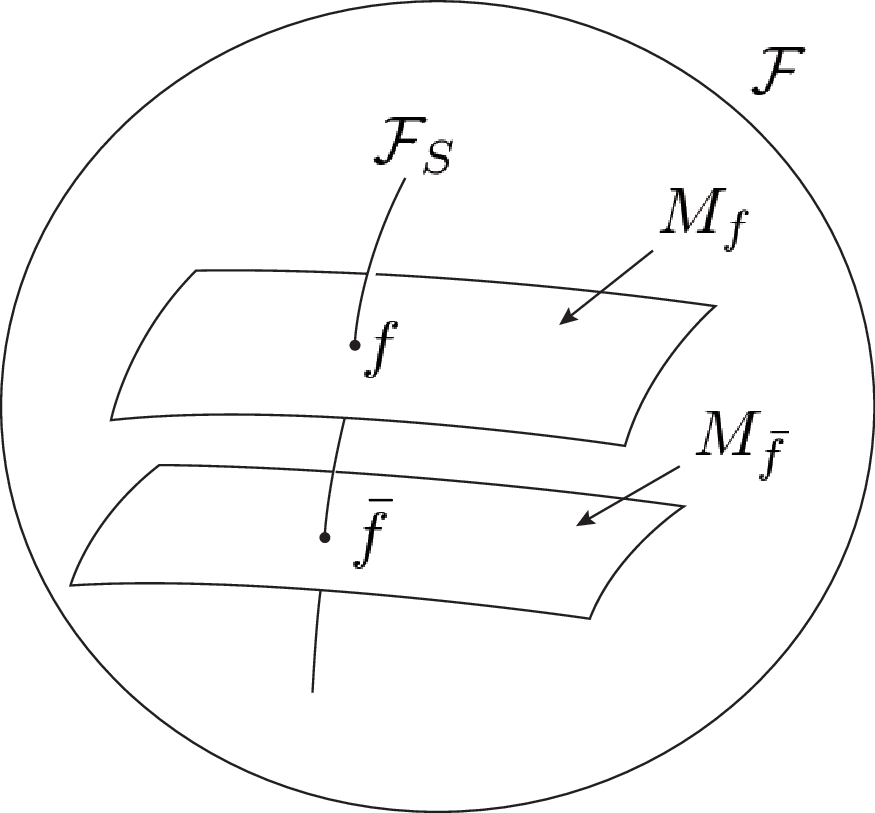}
	\caption{Decomposition of $\mathcal{F}$}
	\label{fig1}
\end{figure}

Geometry of a manifold of probability distributions has so far been studied by information geometry and Wasserstein geometry.  The two geometries capture different aspects of a manifold of probability distributions.  We use a divergence measure to explain this.  Let $D_F [p({\bm{x}}),q({\bm{x}})]$ and $D_W [p({\bm{x}}),q({\bm{x}})]$ be two divergence measures between distributions $p({\bm{x}})$ and $q({\bm{x}})$, where subscripts $F$ and $W$ represent Fisher-based information geometry and Wasserstein geometry, respectively.  Information geometry uses an invariant divergence $D_F$, typically the Kullback--Leibler divergence.  Wasserstein divergence $D_W$ is defined by the cost of transporting masses distributed in form $p({\bm{x}})$ to another $q({\bm{x}})$.  Roughly speaking, $D_F$ measures the vertical differences of $p({\bm{x}})$ and $q({\bm{x}})$, for example, represented by their log-ratio $\log (p({\bm{x}})/q({\bm{x}}))$, whereas $D_W$ measures the horizontal differences of $p({\bm{x}})$ and $q({\bm{x}})$ which corresponds to the transportation cost from $p({\bm{x}})$ to $q({\bm{x}})$. See Figure~\ref{fig3}.

\begin{figure}
	\centering
	\includegraphics[width=12cm]{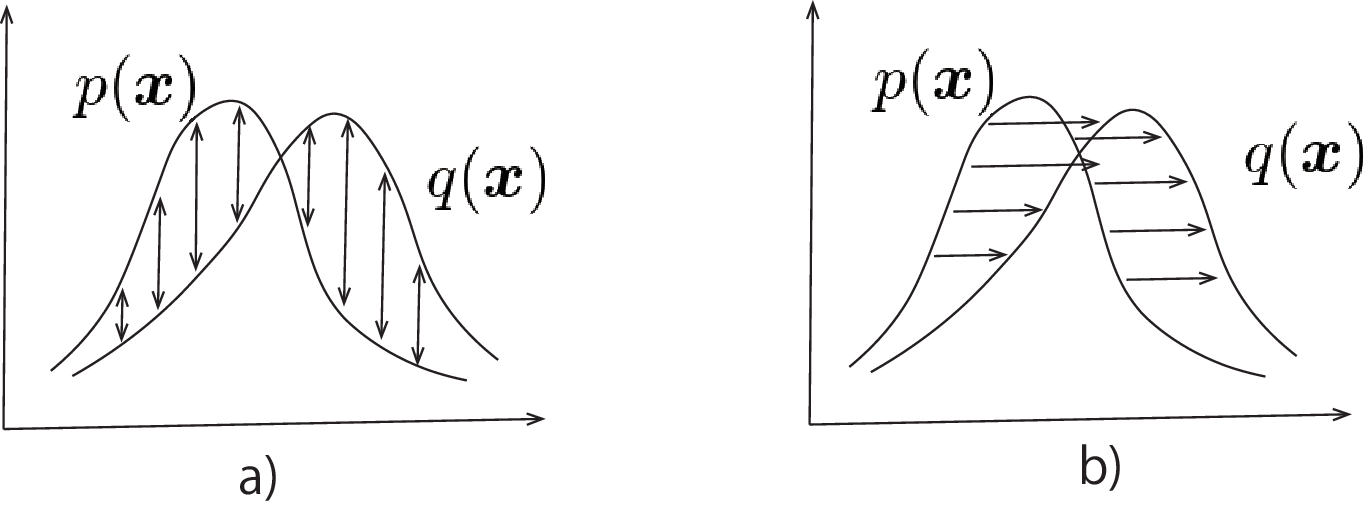}
	\caption{(a) $F$-divergence. (b) $W$-divergence.}
	\label{fig3}
\end{figure}

Information geometry is constructed based on the invariance principle of Chentsov \citep{Chentsov} such that $D_F [p({\bm{x}}),q({\bm{x}})]$ is invariant under invertible transformations of the coordinates ${\bm{x}}$ of the sample space $X$.  This implies that the divergence does not depend on the coordinate system of $X$.  We then have a unique Riemannian metric, which is the Fisher--Rao metric, and also a dual pair of affine connections \citep{AmariNagaoka2007}.  This is useful not only for analyzing the performances of statistical inference but also for image analysis, machine learning, statistical physics, and many others (see \citep{Amari2016}).

Wasserstein geometry has an old origin, proposed by G. Monge in 1781 as a problem of transporting mass distributed in the form $p({\bm{x}})$ to another $q({\bm{x}})$ such that the total transportation cost is minimized.  It depends on the transportation cost $c({\bm{x}}, {\bm{y}})$ between two locations ${\bm{x}}, {\bm{y}} \in X$.  The cost is usually a function of the Euclidean distance between ${\bm{x}}$ and ${\bm{y}}$.  We use the square of the distance as a cost function, which gives $L^2$-Wasserstein geometry.  This Wasserstein geometry directly depends on the Euclidean distance of $X={\bm{R}}^d$.  Therefore, it is useful for problems that intrinsically depend on the metric structure of $X$, such as the transportation problem, non-equilibrium statistical physics, pattern analysis, machine learning and many others.
However, it is in general difficult to calculate the Wasserstein distance \citep{PC2019}.
See recent papers \cite{JL2020,PC2019} for computational algorithms of the Wasserstein distance.

It is natural to search for the relation between the two geometries. There are a number of such trials, including \citet{AKO2018,AKOC2019,KZ2020,RW2023, ito2023,WY2022,Chizat,Kondratyev,Liero} and others.
See \citet{KZ2022} for a survey.

\citet{ChenLi} studied a general regular statistical model specified by a finite number of parameters and pulled-back a Riemannian metric from the Otto metric in the space consisting of all smooth and positive density functions on $\mathbb{R}^n$. 
Using this metric (W-metric), they studied the natural gradient estimation procedures, obtaining the result that the W-natural gradient has better performances compared to the natural gradient procedures due to the Fisher-Rao metric. 
This is remarkable observation of showing the usefulness of W-metric in statistical inference. 
We follow this idea of pulling-back the Otto Riemannian metric to a regular statistical model, using their regularity conditions which guarantee the existence and uniqueness of the pulled-back Riemannian metric.

\citet{LZ2019} gave a unified framework for the two geometries. The present article is based on their framework and focuses on the affine deformation model, for which the standard waveform $f$ and the deformation parameter ${\bm{\theta}}$ are separated. 
\citet{LZ2019} further introduced the Wasserstein score function in parallel to the Fisher score function, defining two estimators $\hat{\bm{\theta}}_F$  and $\hat{\bm{\theta}}_W$ thereby.  
The former $\hat{\bm{\theta}}_F$ is the maximum likelihood estimator that maximizes the log likelihood.  
This is the one that minimizes an invariant divergence from the empirical distribution $\hat{p}({\bm{x}})$ to a parametric model, where the empirical distribution is given based on $n$ independent observations ${\bm{x}}_1, \cdots, {\bm{x}}_n$ as
\begin{equation*}
	\hat{p}({\bm{x}}) = \frac 1n \sum_{t=1}^n \delta
	\left( {\bm{x}}-{\bm{x}}_t \right),
\end{equation*}
and $\delta({\bm{x}})$ is the delta function.  
The latter Wasserstein estimator $\hat{\bm{\theta}}_W$ is defined as the zero point of the Wasserstein score.
It is asymptotically equivalent to the minimizer of the $W$-divergence between the empirical distribution and model.
Also, \citet{LZ2019} further defined the $F$-efficiency and $W$-efficiency of a consistent estimator $\hat{\bm{\theta}}$ given a statistical model $\mathcal{M}=\left\{p({\bm{x}}, {\bm{\theta}}) \right\}$, proving the Cram\'{e}r-Rao type inequalities. 

The present paper is organized as follows. 
In Section \ref{sec:2}, we introduce two divergences between distributions, one based on the invariance principle and the other based on the transportation cost.  
The divergences give two Riemannian structures in the space ${\mathcal{F}}$ of probability distributions $p({\bm{x}})$ over $X={\bm{R}}^d$.  
A regular statistical model $\mathcal{M}= \left\{ p({\bm{x}}, {\bm{\theta}})\right\}$ parameterized by ${\bm{\theta}}$ is a finite-dimensional submanifold embedded in ${\mathcal{F}}$.  
In Section \ref{sec:3}, we define the $F$- and $W$-score functions following \cite{LZ2019}.  
The Riemannian structure of the tangent space of probability distributions is pulled-back to the model submanifold, giving both the Riemannian metrics and score functions.  
We define the $F$- and $W$-estimators $\hat{\bm{\theta}}_F$ and $\hat{\bm{\theta}}_W$ by using the $F$- and $W$-score functions, respectively.
Section \ref{sec:4} defines the affine deformation statistical model.  
Section \ref{sec:ellip} studies the elliptically symmetric affine deformation model $\mathcal{M}_f$, where $f$ is a spherically symmetric standard form.
For this model, we show that the $W$-score functions are quadratic functions of ${\bm{x}}$.  
Hence, it is proved that $\hat{\bm{\theta}}_W$ is a moment estimator.   
For the Gaussian shape, the $F$-estimator and $W$-estimator coincide.
We also show that $\mathcal{M}_f$ and ${\mathcal{F}}_S$ are orthogonal in the $W$-geometry, implying the separation of the waveform and deformation.
In Section \ref{sec:robust}, we elucidate the role of $W$-efficiency from the point of view of robustness to a change in the waveform $f$ due to observation noise. 
Section \ref{sec:conc} briefly summarizes the paper and mentions future work.

\section{Riemannian structures in the space of probability densities}\label{sec:2}

We consider the space ${\mathcal{F}}= \left\{p({\bm{x}}) \right\}$ of all smooth positive probability density functions on ${\bm{R}}^d$ that have finite second moments.  
Later, we may relax the conditions of positivity and smoothness, when we discuss a parametric model, in particular the deformation model\footnote{For example, the probability density of the uniform distribution inside an ellipse takes zero outside of the ellipse and thus non-smooth on the boundary.}.  
We define a divergence function $D[p({\bm{x}}), q({\bm{x}})]$, which represents the degree of difference between $p({\bm{x}})$ and $q({\bm{x}})$.  The square of the $L^2$ distance between $p({\bm{x}})$ and $q({\bm{x}})$ plays this role, but a divergence does not necessarily need to be symmetric with respect to $p({\bm{x}})$ and $q({\bm{x}})$.  A divergence function satisfies the following conditions:
\begin{enumerate}
	\item $D[p({\bm{x}}),q({\bm{x}})] \ge 0$ and the equality holds if and only if $p({\bm{x}}) = q({\bm{x}})$.
	\item  Let $\delta p({\bm{x}})$ be an infinitesimally small deviation of $p({\bm{x}})$.  Then, $D[p({\bm{x}}),p({\bm{x}}) +  \delta p({\bm{x}})]$ is approximated by a positive quadratic functional of $\delta p({\bm{x}})$.
\end{enumerate}

A divergence is said to be invariant if
\begin{equation*}
	D [ p({\bm{x}}),q({\bm{x}}) ] = 
	D [ \tilde{p}({\bm{y}}), \tilde{q}({\bm{y}}) ]
\end{equation*}
holds for every smooth reversible transformation ${\bm{k}}$ of the coordinates from ${\bm{x}} \in {\bm{R}}^d$ to ${\bm{y}}={\bm{k}}({\bm{x}})$, where 
\begin{equation*}
	\tilde{p} ({\bm{y}}) =
	\left|
	\frac{\partial {\bm{x}}}{\partial {\bm{y}}}
	\right| p({\bm{x}}).
\end{equation*}
A typical invariant divergence is the $\alpha$-divergence $(\alpha \ne \pm 1)$ defined by
\begin{eqnarray*}
	D_{\alpha}[ p({\bm{x}}),q({\bm{x}}) ] &=& \frac 4{1-\alpha^2} \left( 1-\int p({\bm{x}})^{(1+\alpha)/2} q({\bm{x}})^{(1-\alpha)/2}  {\rm d} \bm{x} \right)
\end{eqnarray*} 
for $\alpha \ne \pm 1$.
For $\alpha= 1$, we define $D_1[p,q]$ by the Kullback--Leibler divergence
\begin{equation*}
	D_1 [ p({\bm{x}}),q({\bm{x}}) ] = \int p({\bm{x}}) \log \frac{p({\bm{x}})}{q({\bm{x}})} {\rm d} \bm{x}.
\end{equation*}
For $\alpha=-1$, we define $D_{-1}[ p({\bm{x}}),q({\bm{x}}) ]=D_1[ q({\bm{x}}),p({\bm{x}}) ]$.
The case $\alpha=0$ is equivalent to the Hellinger divergence
\begin{equation*}
	H^2 [ p({\bm{x}}),q({\bm{x}}) ] = \frac{1}{2} \int \left(
	\sqrt{p({\bm{x}})} - \sqrt{q({\bm{x}})} \right)^2 {\rm d} \bm{x}.
\end{equation*}
A characterization of the $\alpha$-divergence is given in \cite{Amari2016}.  
The $\alpha$-divergence gives information-geometric structure to $\mathcal{F}$.

Another divergence is the Wasserstein divergence.  
Let us transport masses piled in the form $p({\bm{x}})$ to another $q({\bm{x}})$.  
To this end, we need to move some mass at ${\bm{x}}$ to another position ${\bm{y}}$.  
Let $\pi ({\bm{x}}, {\bm{y}})$ be a coupling of $p({\bm{x}})$ and $q({\bm{y}})$:
\begin{eqnarray}
	\label{eq:am1420230116}
	\int \pi ({\bm{x}}, {\bm{y}}) {\rm d} {\bm{y}}
	&=& p({\bm{x}}), \\
	\label{eq:am1520230116}
	\int \pi ({\bm{x}}, {\bm{y}}) {\rm d} \bm{x}
	&=& q({\bm{y}}).
\end{eqnarray}
Note that $\pi$ does not necessarily have a density.
For convenience, we use the notation $\pi ({\bm{x}}, {\bm{y}})$ in this paper.
Let $c({\bm{x}}, {\bm{y}})$ be the cost of transporting a unit of mass from ${\bm{x}}$ to ${\bm{y}}$.
Then, the Wasserstein divergence $D_W [p({\bm{x}}),q({\bm{x}})]$ is the minimum transporting cost from $p({\bm{x}})$ to $q({\bm{x}})$.  By using stochastic plan $\pi ({\bm{x}}, {\bm{y}})$,  the Wasserstein divergence between $p({\bm{x}})$ and $q({\bm{x}})$ is given by
\begin{equation*}
	D_W \left[p({\bm{x}}),q({\bm{x}}) \right] =
	{\mathop{\inf}_{\pi}} \int c({\bm{x}}, {\bm{y}}) \pi
	({\bm{x}}, {\bm{y}}) {\rm d} \bm{x} {\rm d}{\bm{y}},
\end{equation*}
where infimum is taken over all stochastic plans $\pi$ satisfying (\ref{eq:am1420230116}) and (\ref{eq:am1520230116}).  
When the cost is the square of the Euclidean distance
\begin{equation*}
	c({\bm{x}}, {\bm{y}}) = \| {\bm{x}}-{\bm{y}} \|^2,
\end{equation*}
we call $D_W$ the $L^2$-Wasserstein divergence. 
We focus on this divergence in the following.
Note that the $L^2$-Wasserstein divergence is the square of the $L^2$-Wasserstein distance.
From Brenier’s theorem \cite{Brenier87,Brenier91}, the optimal transport is actually induced by a transport map. 
In other words, for each point $\bm{x}$, $\pi(\bm{x}, \cdot)$ is supported at a single point.

The dynamic formulation of the optimal transport problem proposed by \cite{Brenier} and developed further by \cite{BB2000,McCann} is useful. Let $\rho ({\bm{x}}, t)$ be a family of probability distributions parameterized by $t$.  It represents the time course $\rho({\bm{x}}, t)$ of transporting $p({\bm{x}})$ to $q({\bm{x}})$, satisfying 
\begin{equation*}
	\rho({\bm{x}}, 0) = p({\bm{x}}), \quad
	\rho({\bm{x}}, 1) = q({\bm{x}}).
\end{equation*}
We introduce potential $\Phi ({\bm{x}}, t)$ such that its gradient $\nabla_{\bm{x}} \Phi ({\bm{x}}, t)$ represents the velocity 
\begin{equation*}
	{\bm{v}}({\bm{x}}, t) =
	\nabla_{\bm{x}} \Phi({\bm{x}}, t)
\end{equation*}
of mass flow at ${\bm{x}}$ and $t$ in the dynamic plan. 
Then, $\Phi({\bm{x}}, t)$ satisfies the following continuity equation
\begin{equation}
	\label{eq:am1920230118}
	\partial_t \rho ({\bm{x}}, t) + \nabla_{\bm{x}} \cdot
	\left\{ \rho ({\bm{x}}, t) \nabla_{\bm{x}} \Phi ({\bm{x}}, t) \right\}
	= 0.
\end{equation}
The Wasserstein divergence is written in the dynamic formulation as
\begin{equation*}
	D_W \left[ p({\bm{x}}),q({\bm{x}}) \right] =
	{\mathop{\inf}_{\Phi}} \int^1_0 \int 
	\| \nabla_{\bm{x}} \Phi ({\bm{x}}, t) \|^2 \rho({\bm{x}}, t) {\rm d} \bm{x} {\rm d} t.
\end{equation*}

We introduce a Riemannian structure to $\mathcal{F}$ by the Taylor expansion of $D[p,p + \delta p]$.  
The Riemannian metric $g$ gives the squared magnitude ${\rm d} s^2$ of an infinitesimal deviation $\delta p ({\bm{x}})$ in the tangent space of $\mathcal{F}$, for example, by
\begin{equation*}
	{\rm d} s^2 = \langle \delta p({\bm{x}}), g (\delta p({\bm{x}})) \rangle, 
\end{equation*}
where
\begin{equation*}
	\langle
	a({\bm{x}}), \bm{b}({\bm{x}})
	\rangle = \int a({\bm{x}}) \bm{b}({\bm{x}}) {\rm d} \bm{x}.
\end{equation*}
In the case of the invariant divergence, 
\begin{equation}
	g_F (\delta p(\bm{x}))= \frac{\delta p(\bm{x})}{p(\bm{x})}, \label{gF}
\end{equation}
so that
\begin{equation*}
	{\rm d} s^2 = \int \frac{\delta p(\bm{x})^2}{p(\bm{x})} {\rm d} \bm{x}.
\end{equation*}
In the case of the $L^2$-Wasserstein divergence, consider the change of density from $\rho ({\bm{x}}, 0)=p({\bm{x}})$ at $t=0$ to $\rho ({\bm{x}}, {\rm d} t) = p({\bm{x}})+ \delta p({\bm{x}})$ at $t={\rm d} t$ for an infinitesimal ${\rm d} t$.
By using the potential $\Phi({\bm{x}})$ of this infinitesimal transport,
\begin{equation*}
	\delta p({\bm{x}}) = -\Delta_p \Phi({\bm{x}}) {\rm d} t + o({\rm d} t),
\end{equation*}
where $\Delta_p$ is the operator defined by
\begin{equation*}
	\Delta_p \Phi ({\bm{x}}) = \nabla_{\bm{x}} \cdot
	\left( p({\bm{x}}) \nabla_{\bm{x}} \Phi ({\bm{x}}) \right).
\end{equation*}
Then, the $L^2$-Wasserstein divergence is 
\begin{align*}
	D_W [p({\bm{x}}),p({\bm{x}})+ \delta p({\bm{x}})] &= \int \| \nabla_{\bm{x}} \Phi ({\bm{x}}) {\rm d} t \|^2 p({\bm{x}}) {\rm d} \bm{x} + o({\rm d} t^2) \\
	&= \langle \Phi({\bm{x}}) {\rm d} t, -\Delta_{p} \Phi ({\bm{x}}) {\rm d} t \rangle + o({\rm d} t^2) \\
	&= \langle \Phi({\bm{x}}) {\rm d} t, \delta p ({\bm{x}}) \rangle+ o({\rm d} t^2),
\end{align*}
where we used integration by parts
\begin{equation}
	\int (\nabla_{\bm{x}} a({\bm{x}}))^{\top} (\nabla_{\bm{x}} \bm{b}({\bm{x}}) ) p({\bm{x}}) {\rm d} \bm{x} = -\int a({\bm{x}}) \Delta_p \bm{b}({\bm{x}}) {\rm d} \bm{x}, 	\label{eq:am3520210118}
\end{equation}
for $a(\bm{x})$ decaying sufficiently fast.
See \cite{ChenLi} for precise regulaity conditions.
Thus, 
\begin{equation*}
	g_W (\delta p(\bm{x})) = \Phi (\bm{x}) {\rm d} t.
\end{equation*}
Note that $\Phi (\bm{x})$ is unique up to an additive constant under regularity conditions (see Theorem 13.8 of \cite{Villani2009} and Section 8.1 of \cite{Villani2003}).
This is Otto's Riemannian metric \cite{Otto}.

We focused on the space $\mathcal{F}$ of smooth and positive densities under $L^2$-Wasserstein geometry.
It is indeed an infinite-dimensional Riemannian manifold \cite{Lott}.
Since the space $\mathcal{F}_S$ is a co-dimension $d+d(d+1)/2$ subspace of $\mathcal{F}$ that specifies the value of linear functionals (first and second moments), it is also an infinite-dimensional Riemannian manifold. 
However, if we consider the larger space of general probability distributions, then it is not even a Banach manifold under $L^2$-Wasserstein geometry, because it includes singular (atomic) distributions for which the tangent space is more restricted than that of $\mathcal{F}$.
Note that information geometry and Wasserstein geometry induce different topologies on the space of probability distributions.
However, this does not bother us when we study only a finite-dimensional regular statistical model included in $\mathcal{F}$.
We follow the regularity conditions given in \citet{ChenLi} throughout the paper.

\section{Score functions and estimators} \label{sec:3}

We consider a regular statistical model $\mathcal{M}=\left\{ p({\bm{x}}, {\bm{\theta}}) \right\} \subset \mathcal{F}$ parameterized by an $m$-dimensional vector ${\bm{\theta}}$.  
The tangent space $T_{\theta} \mathcal{M}$ of $\mathcal{M}$ at ${\bm{\theta}}$ is spanned by $m$ functions
\begin{equation*}
	\partial_i p ({\bm{x}}, {\bm{\theta}}) =
	\frac{\partial}{\partial \theta^i}
	p ({\bm{x}}, {\bm{\theta}})
\end{equation*}
for $i=1, \dots, m$, so that a tangent vector $\delta p({\bm{x}})$ is given by
\begin{equation}
	\label{eq:am2920230131}
	\delta p({\bm{x}}) = \partial_i p ({\bm{x}}, {\bm{\theta}})
	d \theta^i.
\end{equation}
Hereafter, the summation convention is used, that is, all indices appearing twice, once as upper and the other as lower indices, e.g. $i$'s in (\ref{eq:am2920230131}), are summed up.

Let us define the score functions $S_i ({\bm{x}}, {\bm{\theta}})$ by using the basis functions $\partial_i p ({\bm{x}}, {\bm{\theta}})$ of the tangent space of $\mathcal{M}$ for $i=1, \cdots, m$. 
In the case of the invariant Fisher geometry, from \eqref{gF}, the Fisher score function $S^F_i ({\bm{x}}, {\bm{\theta}})$ is defined by
\begin{equation*}
	S^F_i ({\bm{x}}, {\bm{\theta}}) =
	\frac{\partial_i p ({\bm{x}}, {\bm{\theta}})}{p({\bm{x}}, {\bm{\theta}})}
	= \partial_i l ({\bm{x}}, {\bm{\theta}}), \quad l ({\bm{x}}, {\bm{\theta}}) = \log p ({\bm{x}}, {\bm{\theta}}),
\end{equation*}
which is the derivative of log-likelihood. 
In Wasserstein geometry, the Wasserstein score ($W$-score) function $S^W_i ({\bm{x}}, {\bm{\theta}})$ \cite{LZ2019} is defined as the solution of
\begin{equation*}
	\Delta_p S^W_i ({\bm{x}}, {\bm{\theta}}) = -\partial_i p ({\bm{x}}, {\bm{\theta}}), \quad {\rm{E}}_{\theta} [ S_i^W ({\bm{x}}, {\bm{\theta}})] = 0,
\end{equation*} 
where the latter condition is imposed to eliminate the indefiniteness due to the integral constant.
By using \eqref{eq:am3520210118} provided it holds, we see that $S^W_i ({\bm{x}}, {\bm{\theta}})$ satisfies the Poisson equation:
\begin{equation}
	\label{eq:poisson}
	\nabla_{\bm{x}} \log p ({\bm{x}}, {\bm{\theta}}) \cdot \nabla_{\bm{x}} S^W_i ({\bm{x}}, {\bm{\theta}}) +
	\Delta_{\bm{x}} S^W_i
	({\bm{x}}, {\bm{\theta}}) +
	\frac{\partial}{\partial \theta_i} \log
	p({\bm{x}}, {\bm{\theta}}) = 0.
\end{equation}
For infinitesimal $\delta$, the map $\bm{x} \mapsto \bm{x}+\delta \nabla_{\bm{x}} S_i^{{W}}(\bm{x}, \bm{\theta})$ is the optimal transport map from $p(\bm{x}, \bm{\theta})$ to $p(\bm{x}, \bm{\theta} + \delta \bm{e}_i)$ with transportation cost
\begin{equation}
	D_W(p(\bm{x}, \bm{\theta}), p(\bm{x}, \bm{\theta} + \delta \bm{e}_i)) = \int \| \delta \nabla_{\bm{x}} S_i^{{W}}(\bm{x}, \bm{\theta}) \|^2 p(\bm{x}, \bm{\theta}) {\rm d} x + o(\delta^2)
\end{equation}
as $\delta \to 0$, where $\bm{e}_i$ is the $i$-th standard unit vector.
See Proposition 8.4.6 of \cite{Ambrosio} for more rigorous statement.
In both Fisher and Wasserstein cases, the score function satisfies
\begin{equation}
	\label{eq:am36}
	{\rm{E}}_{\theta} [ S_i ({\bm{x}}, {\bm{\theta}}) ] = 0.
\end{equation}
We think that the existence and uniqueness of $S_i^W$ can directly be established along the lines of Brenier's theorem, as suggested by one of the reviewers.
Note that \cite{ChenLi} discussed some results restricting the Wasserstein metric to a regular parameteric statitical manifold.

The Riemannian metric tensor $g_{ij}({\bm{\theta}})$ is pulled-back from $g$ in ${\mathcal{F}}$ to $\mathcal{M}$. 
Note that \citet{ChenLi} provided a detailed account on the restriction of the Wasserstein metric to a parametric statistical manifold with regularity conditions and concrete examples.
In the Fisherian case, 
\begin{equation*}
	g^F_{ij} ({\bm{\theta}}) = {\rm{E}}
	\left[ \partial_i l (\bm{x}, {\bm{\theta}}) \partial_j l (\bm{x}, {\bm{\theta}}) \right] =
	\int p (\bm{x}, {\bm{\theta}}) \partial_i l 
	({\bm{x}}, {\bm{\theta}}) \partial_j l
	({\bm{x}}, {\bm{\theta}}) {\rm d} \bm{x}.
\end{equation*}
In the Wasserstein case, 
\begin{align}
	g^W_{ij}({\bm{\theta}}) &= 	-\int S^W_i ({\bm{x}}, {\bm{\theta}}) \Delta_p S^W_j ({\bm{x}}, {\bm{\theta}}) {\rm d} \bm{x} \nonumber \\
	&=	\int p ({\bm{x}}, {\bm{\theta}})
	\nabla_{\bm{x}} S^W_i ({\bm{x}}, {\bm{\theta}})^{\top}
	\nabla_{\bm{x}} S^W_j ({\bm{x}}, {\bm{\theta}}) {\rm d} \bm{x} \nonumber \\
	&= {\rm E} [\nabla_{\bm{x}} S^W_i ({\bm{x}}, {\bm{\theta}})^{\top}
	\nabla_{\bm{x}} S^W_j ({\bm{x}}, {\bm{\theta}})], \label{WIM}
\end{align}
where identity (\ref{eq:am3520210118}) is used.

The score functions $S_i ({\bm{x}}, {\bm{\theta}})$  give a set of estimating functions from (\ref{eq:am36}), which are used to obtain an estimator $\hat{\bm{\theta}}$.  
Suppose that we have $n$ independent observations ${\bm{x}}_1, \cdots, {\bm{x}}_n$ from $p (\bm{x}, {\bm{\theta}})$.
Let $\hat{p}_{\rm{emp}}({\bm{x}})$ be the empirical distribution given by
\begin{equation*}
	\hat{p}_{\rm{emp}}({\bm{x}}) = \frac 1n
	\sum^n_{t=1} \delta
	\left({\bm{x}}-{\bm{x}}_t \right).
\end{equation*}
Then, replacing expectation ${\rm{E}}$ in (\ref{eq:am36}) by the expectation with respect to the empirical distribution, we have estimating equations
\begin{equation}
	\label{eq:am4420230516}
	{\rm{E}}_{\rm{emp}} 
	[ S_i ({\bm{x}}, \hat{\bm{\theta}}) ] =
	\frac 1n \sum^n_{t=1} S_i ( {\bm{x}}_t, \hat{\bm{\theta}} ) = 0, \quad i=1, \cdots, m.
\end{equation}
The solution $\hat{\bm{\theta}}$ gives a consistent estimator for large $n$ under regularity conditions on the score function \cite{vV}.  
Roughly speaking, $\hat{\bm{\theta}}$ is the projection of $\hat{p}_{\rm{emp}}({\bm{x}})$ to the model $\mathcal{M}$ with respect to the metric $g$ (see Figure~\ref{fig4}).  
It is the solution of
\begin{equation*}
	\langle \hat{p}_{\rm{emp}} ({\bm{x}}), S_i ({\bm{x}}, {\bm{\theta}}) \rangle
	= 0,
\end{equation*}
giving a consistent estimator $\hat{\bm{\theta}}$. 
A consistent estimator is Fisher efficient when the projection is orthogonal with respect to the Fisher--Rao metric \citep{AmariNagaoka2007}.

\begin{figure}
	\centering
	\includegraphics[width=8cm]{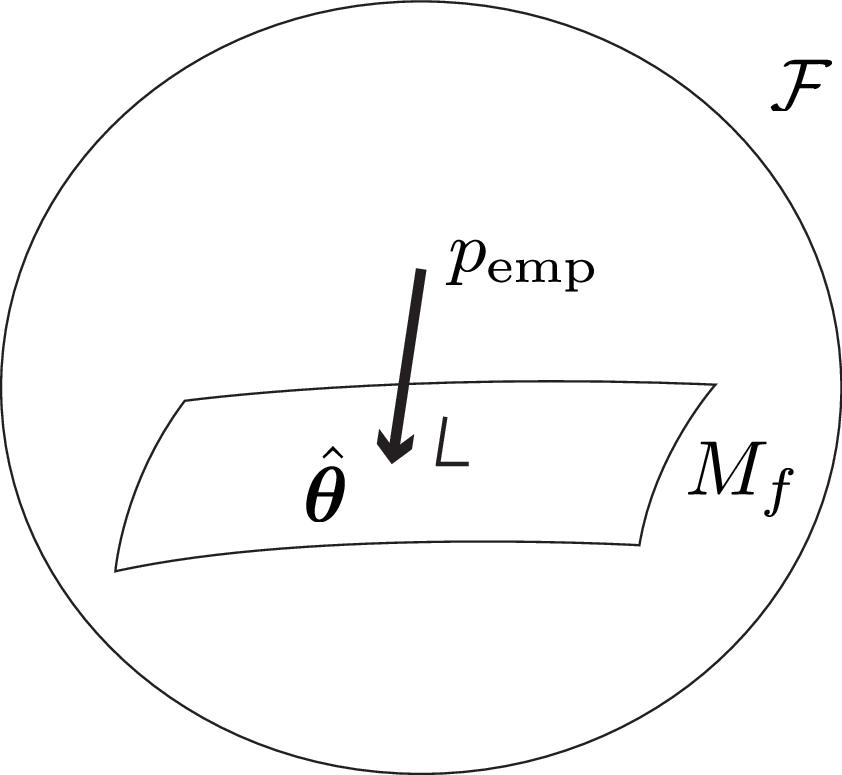}
	\caption{Projection of $\hat{p}_{\rm{emp}}$ to $\mathcal{M}$.}
	\label{fig4}
\end{figure}

For the invariant Fisherian case, the estimator $\hat{\bm{\theta}}_F$ defined by \eqref{eq:am4420230516} is the maximum likelihood estimator:
\begin{equation*}
	\frac 1n \sum^n_{t=1} S_i^F ( {\bm{x}}_t, \hat{\bm{\theta}}_F ) = 0.
\end{equation*}
The Cram\'{e}r-Rao theorem gives a matrix inequality for any unbiased estimator $\hat{\bm{\theta}}$,
\begin{equation*}
	{\rm{Cov}} [ \hat{\bm{\theta}} ] \succeq
	\frac 1n g^{-1}_F ({\bm{\theta}}),
\end{equation*}
where ${\rm{Cov}} [\cdot]$ is the covariance matrix and $\succeq$ denotes the matrix order defined by the positive semidefiniteness.  
The maximum likelihood estimator $\hat{\bm{\theta}}_F$ satisfies 
\begin{equation*}
	{\rm{Cov}} 
	[ \hat{\bm{\theta}}_F ] \approx
	\frac 1n g^{-1}_F ({\bm{\theta}})
\end{equation*}
asymptotically.  Hence, it minimizes the error covariance matrix and the minimized error covariance is given asymptotically by the inverse of the Fisher metric tensor $g_F$ divided by $n$.
Such a property is called the Fisher efficiency.

In the following, we study the characteristics of the estimator $\hat{\bm{\theta}}_{W}$ defined by \eqref{eq:am4420230516} with the Wasserstein score:
\begin{equation*}
	\frac 1n \sum^n_{t=1} S_i^W ( {\bm{x}}_t, \hat{\bm{\theta}}_W ) = 0.
\end{equation*}
We call $\hat{\bm{\theta}}_{W}$ the Wasserstein estimator ($W$-estimator) following \cite{LZ2019}.
In the case of the one-dimensional location-scale model, the Wasserstein estimator is asymptotically equivalent to the estimator obtained by minimizing the Wasserstein divergence (transportation cost) from the empirical distribution $\hat{p}_{\rm{emp}}({\bm{x}})$ to model $\mathcal{M}$:
\begin{equation}\label{am_est}
	\hat{\bm{\theta}}_{Wp} = {\mathop{\arg\min}_{{\bm{\theta}}}} \; D_W
	\left[ \hat{p}_{\rm{emp}}({\bm{x}}), p ({\bm{x}}, {\bm{\theta}}) \right].
\end{equation}
See the end of Section~\ref{sec:ellip}.
The properties of $\hat{\bm{\theta}}_{Wp}$ were studied in detail by \cite{AM2022} in the case of the one-dimensional location-scale model. 
Note that $\hat{\bm{\theta}}_{W} \neq \hat{\bm{\theta}}_{Wp}$ in general, contrary to the Fisher case.

\section{Affine deformation model}
\label{sec:4}
Now, we focus on the affine deformation model.
Let $f({\bm{z}}) \in \mathcal{F}_S$ be a standard probability density function satisfying (\ref{eq:am2}), (\ref{eq:am3}), and (\ref{eq:am4}).  
To define $\mathcal{M}_f$, we use affine deformation of ${\bm{x}}$ to ${\bm{z}}$ by
\begin{equation*}
	{\bm{z}} = \Lambda ({\bm{x}}-{\bm{\mu}}),
\end{equation*}
where ${\bm{\mu}}$ is a vector representing shift of location and $\Lambda$ is a non-singular matrix.  
Hence, ${\bm{\theta}}= ({\bm{\mu}}, \Lambda)$ is $m$-dimensional where $m \leq d+d^2$ due to the possible symmetries in $f$. 
The model $\mathcal{M}_f$ is defined from
\begin{equation*}
	p ({\bm{x}}, {\bm{\theta}}) {\rm d} \bm{x} =
	f({\bm{z}}) d{\bm{z}},
\end{equation*}
that is
\begin{equation*}
	p ({\bm{x}}, {\bm{\theta}}) = |\Lambda| f (\Lambda ({\bm{x}}-{\bm{\mu}})),
\end{equation*}
satisfying
\begin{equation} \label{moment}
	\int {\bm{x}} p ({\bm{x}}, {\bm{\theta}})
	{\rm d} \bm{x} = {\bm{\mu}}, 
	\quad \int (\bm{x}-\bm{\mu}) (\bm{x}-\bm{\mu})^{\top} p ({\bm{x}}, {\bm{\theta}}) {\rm d} \bm{x} = \Lambda^{-1} (\Lambda^{-1})^{\top}. 
\end{equation}
This is a generalization of the location-scale model, which is simply obtained by putting $\Lambda = (1/\sigma) I$, with $\sigma$ being the scale factor.  It should be noted that $\Lambda$ is decomposed as $\Lambda = U DO$, where $U$ and $O$ are orthogonal matrices and $D$ is a positive diagonal matrix (singular value decomposition).  
In the following, we denote the log probability of standard shape $f$ by 
\begin{equation*}
	l ({\bm{z}}) = \log f ({\bm{z}}).  
\end{equation*}

For each standard shape function $f \in {\mathcal{F}}_S$, an affine deformation model $\mathcal{M}_f$  parameterized by ${\bm{\theta}} = ({\bm{\mu}}, \Lambda) \in \Theta$ is attached.  Thus, ${\mathcal{F}}$ is decomposed as 
\begin{equation*}
	{\mathcal{F}} = {\mathcal{F}}_S \times \Theta / \sim,
\end{equation*}
where $\sim$ is the equivalence relation of equality in distribution.
For a general $f$, $\mathcal{M}_f$ has cone structure parameterized by $({\bm{\mu}}, D, U, O)$, where $\Lambda = U D O$ and $D$ is a diagonal matrix with diagonal elements $d_i>0$.
Thus, $D$ can be identified with a vector in the open positive quadrant $\bm{R}_+^d$ of ${\bm{R}}^d$, which has the cone structure. 
Since ${\bm{\mu}} \in {\bm{R}}^d$, and $U,O \in {\mathcal{O}}(d)$, we have the decomposition
\begin{equation*}
	\mathcal{M}_f = {\bm{R}}^d \times \bm{R}_+^{d} \times {\mathcal{O}}(d) \times {\mathcal{O}}(d) / \sim.
\end{equation*}
See \cite{Takatsu2012} for the cone structure of ${\mathcal{F}}$.  When $f$ is Gaussian, its structure is studied in detail by \cite{Takatsu2011}.

When $p({\bm{x}})$ belongs to $\mathcal{M}_f$, the waveform of $p({\bm{x}})$ is said to be equivalent to that of $f$.  
The space $\mathcal{M}_f$ consists of the distributions of all equivalent waveforms.
All ellipsoidal shapes are equivalent to a spherical shape.  A family of special parallel-piped shapes are equivalent to a cubic form (see Figure~\ref{fig5}).  Therefore, our model is useful for separating the effect of the shape from location and affine deformation.  

\begin{figure}
	\centering
	\includegraphics[width=12cm]{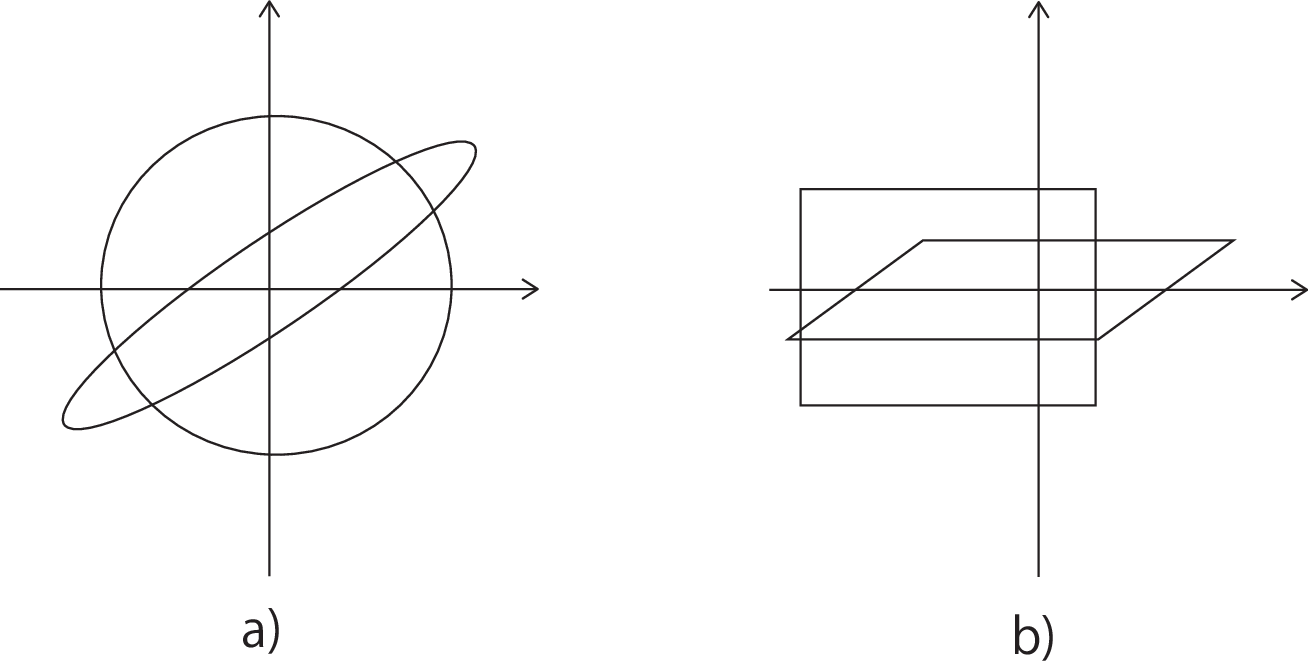}
	\caption{Equivalent shapes.}
	\label{fig5}
\end{figure}

We may consider subclasses of the transformation model.
One simple example is the location model, in which $\Lambda$ is fixed to the identity matrix $I$. A stronger theorem is known in such a simple model \citep{Givens}.
In our context, it can be expressed as follows.

\begin{proposition}\upshape
	Wasserstein geometry gives an orthogonal decomposition of the shape and locations,
	\begin{equation*}
		D_W \left[ 
		f_1 \left({\bm{x}}-{\bm{\mu}}_1 \right),  f_2 \left({\bm{x}}-{\bm{\mu}}_2 \right)
		\right] = D_W \left[f_1({\bm{x}}), f_2 ({\bm{x}}) \right] + \|{\bm{\mu}}_1-{\bm{\mu}}_2 \|^2.
	\end{equation*}
\end{proposition}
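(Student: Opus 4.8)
The plan is to work directly with the Kantorovich (coupling) formulation of the $L^2$-Wasserstein divergence, rather than invoking Brenier's map, since the decomposition is fundamentally a statement about how a translation acts on the set of admissible couplings. First I would set $p(\bm{x}) = f_1(\bm{x}-\bm{\mu}_1)$ and $q(\bm{y}) = f_2(\bm{y}-\bm{\mu}_2)$ and observe that a measure $\pi(\bm{x},\bm{y})$ is a coupling of $(p,q)$ in the sense of \eqref{eq:am1420230116}--\eqref{eq:am1520230116} if and only if its translate $\tilde{\pi}(\bm{u},\bm{v}) := \pi(\bm{u}+\bm{\mu}_1,\, \bm{v}+\bm{\mu}_2)$ is a coupling of $(f_1,f_2)$. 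Indeed, the change of variables $\bm{u}=\bm{x}-\bm{\mu}_1$, $\bm{v}=\bm{y}-\bm{\mu}_2$ is a measure-preserving bijection that carries the marginal constraints for $(p,q)$ exactly onto those for $(f_1,f_2)$, so the two admissible families of couplings are in one-to-one correspondence.

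Next I would rewrite the transport cost under this substitution. Writing $\bm{x}-\bm{y} = (\bm{u}-\bm{v}) + (\bm{\mu}_1-\bm{\mu}_2)$ and expanding the square yields
\[
\|\bm{x}-\bm{y}\|^2 = \|\bm{u}-\bm{v}\|^2 + 2(\bm{u}-\bm{v})^{\top}(\bm{\mu}_1-\bm{\mu}_2) + \|\bm{\mu}_1-\bm{\mu}_2\|^2 .
\]
Integrating against $\tilde{\pi}$, the first term reproduces the shape transport cost, while the last term contributes the constant $\|\bm{\mu}_1-\bm{\mu}_2\|^2$ because $\tilde{\pi}$ is a probability measure. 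The crux is the vanishing of the cross term: using the marginal constraints on $\tilde{\pi}$, the integral of $\bm{u}$ (resp.\ $\bm{v}$) against $\tilde{\pi}$ returns the mean of $f_1$ (resp.\ $f_2$), both of which are $\bm{0}$ by the standard-shape normalization \eqref{eq:am3} since $f_1,f_2\in\mathcal{F}_S$. Hence $\int (\bm{u}-\bm{v})^{\top}(\bm{\mu}_1-\bm{\mu}_2)\,\tilde{\pi}\,{\rm d}\bm{u}\,{\rm d}\bm{v}=0$ for every admissible $\tilde{\pi}$, independently of the shift, so that
\[
\int \|\bm{x}-\bm{y}\|^2\,\pi \,{\rm d}\bm{x}\,{\rm d}\bm{y} = \int \|\bm{u}-\bm{v}\|^2\,\tilde{\pi}\,{\rm d}\bm{u}\,{\rm d}\bm{v} + \|\bm{\mu}_1-\bm{\mu}_2\|^2 .
\]

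Finally I would take the infimum over couplings on both sides. Because the bijection $\pi\leftrightarrow\tilde{\pi}$ matches the admissible sets and the additive term $\|\bm{\mu}_1-\bm{\mu}_2\|^2$ does not depend on the coupling, the two infima correspond and differ by exactly this constant; in particular the optimal coupling for the shapes, once translated, is optimal for the shifted densities. This gives the claimed identity. The one point requiring genuine care — and the step I would flag as the main obstacle — is the cross-term cancellation, which is \emph{not} a generic feature of the Wasserstein distance but hinges squarely on the zero-mean normalization \eqref{eq:am3}: for waveforms with unequal nonzero means the cross term would survive, destroying the clean additive (Pythagorean) decomposition and with it the orthogonality of shape and location.
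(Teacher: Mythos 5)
Your proof is correct; note, however, that the paper itself gives no proof of this proposition at all --- it is stated as a specialization of a known result of Givens and Shortt (1984), cited without argument, so there is no ``paper proof'' to match step by step. Your coupling-translation argument is essentially the standard proof of that cited result, and every step is sound: the map $(\bm{x},\bm{y}) \mapsto (\bm{x}-\bm{\mu}_1,\, \bm{y}-\bm{\mu}_2)$ is a unit-Jacobian bijection carrying the couplings of the shifted pair, in the sense of \eqref{eq:am1420230116}--\eqref{eq:am1520230116}, onto those of $(f_1,f_2)$; the quadratic cost expands exactly; the cross term depends only on the marginals and is therefore constant over the admissible set; and it vanishes because $f_1,f_2 \in \mathcal{F}_S$ have zero mean by \eqref{eq:am3} (integrability of the first moments being guaranteed by the standing finite-second-moment assumption on $\mathcal{F}$). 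Since the additive term is coupling-independent, passing to the infimum is legitimate, and your observation that the optimal shape coupling translates to an optimal coupling for the shifted densities is also right. What your route buys over the paper's citation is self-containedness and transparency about hypotheses: it shows that only \emph{equality} of the two means is used --- the covariance normalization \eqref{eq:am4} plays no role here --- and your closing caveat correctly isolates where the Pythagorean structure would break. One small sharpening of that caveat: the cancellation is generic in a slightly different sense, since for arbitrary $p,q$ one always has the decomposition of $D_W$ into the squared distance between the means plus $D_W$ of the mean-centered versions (this is the ``stronger theorem'' of Givens and Shortt alluded to just before the proposition); what fails for waveforms with unequal means is only the identification of the translation parameters $\bm{\mu}_1,\bm{\mu}_2$ with the actual means, leaving a surviving bilinear term $2(\bm{m}_1-\bm{m}_2)^{\top}(\bm{\mu}_1-\bm{\mu}_2)$ exactly as you predict.
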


\section{Elliptically symmetric deformation model}\label{sec:ellip}
Here, we focus on the cases where $f(\bm{z})$ is spherically symmetric and thus written as $f(\bm{z})=g(\|\bm{z}\|)$ for some function $g$.
Thus,
\begin{equation}
	\label{eq:am61}
	p ({\bm{x}}, {\bm{\theta}}) =
	|\Lambda| g(\| \Lambda ({\bm{x}}-{\bm{\mu}}) \|),
\end{equation}
which is elliptically symmetric.
We restrict the parameter $\Lambda$ to be symmetric and positive definite in this section.
Namely, the dimension of $\mathcal{M}_f$ is $d+d(d+1)/2$.

First, we consider the $F$-estimator $\hat{\bm{\theta}}_F$ (maximum likelihood estimator).
The log-likelihood is given by
\begin{equation*}
	\log p ({\bm{x}}, {\bm{\theta}})  
	= \log |\Lambda| + \log g (\| \Lambda ({\bm{x}}- {\bm{\mu}}) \|).
\end{equation*}
When there are $n$ independent observations ${\bm{x}}_1, \cdots, {\bm{x}}_n$, summation is taken over them so that we have the likelihood equations
\begin{equation*}
	\sum^n_{j=1} \partial_{\bm{\theta}} \log p \left({\bm{x}}_j, {\bm{\theta}} \right) = 0.
\end{equation*}
The solution $\hat{\bm{\theta}}_F$ strongly depends of the shape $g$.

Contrary to this, the $W$-estimator $\hat{\bm{\theta}}_W$ does not depend on the shape $g$ as follows.
We write the $i$-th standard unit vector by $\bm{e}_i \in \mathbb{R}^d$ so that $(\bm{e}_i)_j=\delta_{ij}$.

\begin{lemma}\label{lem:quad}
	For a symmetric $A \in \mathbb{R}^{d \times d}$, $\bm{b} \in \mathbb{R}^d$ and $c \in \mathbb{R}$,
	\begin{align*}
		\nabla_{\bm{x}} \left( \frac{1}{2} \bm{x}^{\top} A \bm{x}+ \bm{b}^{\top} \bm{x} + c \right) = A \bm{x} + \bm{b}, \quad \Delta_{\bm{x}} \left( \frac{1}{2} \bm{x}^{\top} A \bm{x} + \bm{b}^{\top} \bm{x} + c \right) = {\rm tr} (A).
	\end{align*}
\end{lemma}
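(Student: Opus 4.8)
The plan is to verify both identities by a direct component-wise computation, the only point requiring care being the symmetry hypothesis on $A$. I would first write the quadratic function in index notation as
\[
\phi(\bm{x}) = \frac{1}{2} \sum_{k,l} A_{kl} x_k x_l + \sum_k b_k x_k + c,
\]
where $A_{kl}$ denotes the $(k,l)$ entry of $A$ and $x_k$ the $k$-th component of $\bm{x}$.

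For the gradient, I would differentiate $\phi$ with respect to $x_i$, applying the product rule to the quadratic term. This produces two contributions, $\frac{1}{2}\sum_l A_{il} x_l$ and $\frac{1}{2}\sum_k A_{ki} x_k$. The key step is to invoke the symmetry $A_{ki} = A_{ik}$, which makes the two sums coincide and cancels the factor $\frac{1}{2}$, yielding $\partial \phi / \partial x_i = (A\bm{x})_i + b_i$. Collecting the components gives the first identity $\nabla_{\bm{x}} \phi = A\bm{x} + \bm{b}$. Without symmetry one would instead obtain $\frac{1}{2}(A + A^{\top})\bm{x} + \bm{b}$, so the hypothesis on $A$ is exactly what is needed to reach the stated form.

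For the Laplacian, I would differentiate the already-computed first derivative once more. From $\partial \phi / \partial x_i = \sum_l A_{il} x_l + b_i$ one reads off $\partial^2 \phi / \partial x_i^2 = A_{ii}$, and summing over $i = 1, \dots, d$ gives $\Delta_{\bm{x}} \phi = \sum_i A_{ii} = {\rm tr}(A)$, which is the second identity. Note that the linear and constant terms $\bm{b}^{\top}\bm{x} + c$ contribute nothing to the Laplacian and only the shift $\bm{b}$ to the gradient, so the essential content lies entirely in the quadratic term.

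The computation is elementary and there is no genuine obstacle. The only place one could slip is in failing to use the symmetry of $A$ when combining the two halves of the gradient, which would leave a spurious factor of $\frac{1}{2}$; this is precisely why the hypothesis that $A$ be symmetric is recorded in the statement.
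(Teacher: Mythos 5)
Your proof is correct and is exactly the direct component-wise computation that the paper dismisses as ``straightforward calculation'' without writing out. Your explicit note that symmetry of $A$ is needed to merge the two halves of the gradient (otherwise one gets $\frac{1}{2}(A+A^{\top})\bm{x}+\bm{b}$) is a sound and complete filling-in of the omitted details.
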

\begin{proof}
	Straightforward calculation.
\end{proof}

\begin{lemma}\label{lem:sylvester}
	Let $A,B \in \mathbb{R}^{d \times d}$ be symmetric matrices.
	If $A$ is positive definite, then the Sylvester equation $AX + XA = B$ has the unique solution $X$, which satisfies $X^{\top}=X$ and ${\rm tr} (X) = {\rm tr} (A^{-1} B)/2$.
\end{lemma}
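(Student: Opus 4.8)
The plan is to diagonalize $A$ and reduce the matrix equation to decoupled scalar equations; this simultaneously delivers existence, uniqueness, and symmetry, after which the trace identity follows from a short manipulation that does not even require solving the equation explicitly.

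First I would invoke the spectral theorem: since $A$ is symmetric and positive definite, write $A = Q \Lambda Q^{\top}$ with $Q$ orthogonal and $\Lambda = \mathrm{diag}(\lambda_1, \dots, \lambda_d)$, $\lambda_i > 0$. Putting $\tilde{X} = Q^{\top} X Q$ and $\tilde{B} = Q^{\top} B Q$, the equation $AX + XA = B$ is equivalent to $\Lambda \tilde{X} + \tilde{X} \Lambda = \tilde{B}$, whose $(i,j)$ entry is the scalar relation $(\lambda_i + \lambda_j) \tilde{X}_{ij} = \tilde{B}_{ij}$. Because $\lambda_i + \lambda_j > 0$ for every pair $(i,j)$, each entry is uniquely determined by $\tilde{X}_{ij} = \tilde{B}_{ij}/(\lambda_i + \lambda_j)$, which establishes both existence and uniqueness; transforming back gives $X = Q \tilde{X} Q^{\top}$. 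For symmetry I would note that $\tilde{B}$ is symmetric (as $B$ is), so $\tilde{X}_{ij} = \tilde{B}_{ij}/(\lambda_i + \lambda_j) = \tilde{B}_{ji}/(\lambda_j + \lambda_i) = \tilde{X}_{ji}$, whence $\tilde{X}$, and therefore $X$, is symmetric. (Equivalently, transposing $AX + XA = B$ shows that $X^{\top}$ solves the same equation, so $X^{\top} = X$ by uniqueness.)

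Finally, for the trace identity I would avoid the solution formula altogether and simply substitute $B = AX + XA$, using the cyclic invariance of the trace: $\mathrm{tr}(A^{-1} B) = \mathrm{tr}\bigl(A^{-1}(AX + XA)\bigr) = \mathrm{tr}(X) + \mathrm{tr}(A^{-1} X A) = \mathrm{tr}(X) + \mathrm{tr}(X) = 2\,\mathrm{tr}(X)$, where $\mathrm{tr}(A^{-1} X A) = \mathrm{tr}(A A^{-1} X) = \mathrm{tr}(X)$. Dividing by $2$ yields $\mathrm{tr}(X) = \mathrm{tr}(A^{-1} B)/2$.

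There is no serious obstacle here; the single point requiring care is the nonvanishing of the divisors $\lambda_i + \lambda_j$, which is exactly where positive definiteness is used. In fact positive definiteness is stronger than strictly necessary: one only needs that no eigenvalue of $A$ equals the negative of another (so that the associated homogeneous Sylvester operator is injective), but the positive-definite hypothesis gives the cleanest and most self-contained argument and matches the setting in which the lemma will later be applied.
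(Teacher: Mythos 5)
Your proof is correct, and it reaches existence and uniqueness by a genuinely different, more self-contained route than the paper. The paper treats the Sylvester equation as a black box: since $A$ is positive definite, the spectra of $A$ and $-A$ are disjoint, so it invokes Theorem~VII.2.1 of Bhatia's \emph{Matrix Analysis} for existence and uniqueness; symmetry is then obtained by exactly the transpose-plus-uniqueness argument you mention only parenthetically, and the trace identity is derived just as you do, by rewriting the equation as $X + A^{-1}XA = A^{-1}B$ and taking traces with cyclic invariance. Your spectral-theorem reduction to the decoupled scalar equations $(\lambda_i + \lambda_j)\tilde{X}_{ij} = \tilde{B}_{ij}$ makes the lemma entirely elementary---no external citation needed---and additionally yields an explicit solution formula, with symmetry of $X$ falling out of symmetry of $\tilde{B}$. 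What the citation buys the paper is generality: Bhatia's theorem handles non-normal $A$ and the general equation $AX - XB = C$, and your closing remark correctly identifies that the sharp hypothesis is precisely the condition in that theorem, namely $\sigma(A) \cap \sigma(-A) = \emptyset$. One small point in your favor: the paper's proof twice says ``positive semidefiniteness'' where positive definiteness is what is actually needed (a semidefinite $A$ with $0 \in \sigma(A)$ also has $0 \in \sigma(-A)$, and the trace step uses $A^{-1}$), whereas your argument makes the nonvanishing of the divisors $\lambda_i + \lambda_j > 0$ explicit and so avoids this slip.
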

\begin{proof}
	From the positive semidefiniteness of $A$, the spectra of $A$ and $-A$ are disjoint. Thus, from Theorem~VII.2.1 of \cite{Bhatia}, the Sylvester equation $AX + XA = B$ has a unique solution.
	
	Let $X$ be the solution of the Sylvester equation.
	From $A^{\top} = A$, we have $A X^{\top} + X^{\top} A = (AX+XA)^{\top} = B^{\top} = B$, which means that $X^{\top}$ is also a solution of the Sylvester equation.
	Since the solution is unique, it implies $X^\top = X$.
	Also, from the positive semidefiniteness of $A$ and $AX + XA =B$, we have $X + A^{-1}XA = A^{-1}B$.
	Taking the trace and using ${\rm tr} (A^{-1}XA)={\rm tr} (X)$, we obtain ${\rm tr} X = {\rm tr} (A^{-1} B)/2$.
\end{proof}

\begin{theorem}\upshape
	\label{theorem_moment}
	For the elliptically symmetric deformation model \eqref{eq:am61}, the Wasserstein score functions are quadratic in $\bm{x}$. 
	Specifically, the Wasserstein score function for $\mu_i$ is
	\begin{align*}
		S_{\mu_i}^{{W}} (\bm{x},\bm{\theta}) = x_i-\mu_i,
	\end{align*}
	and the Wasserstein score function for $\Lambda_{ij}$ is
	\begin{align*}
		S_{\Lambda_{ij}}^W(\bm{x},\bm{\theta}) = \frac{1}{2} \bm{x}^{\top} A \bm{x} + \bm{b}^{\top} \bm{x} - {\rm E}_{\bm{\theta}} \left[ \frac{1}{2} \bm{x}^{\top} A \bm{x} + \bm{b}^{\top} \bm{x} \right],
	\end{align*}
	where $A$ is the unique solution of the Sylvester equation $\Lambda^2 A + A \Lambda^2 = -\Lambda \bm{e}_i \bm{e}_j^{\top} - \bm{e}_i \bm{e}_j^{\top} \Lambda$ and $\bm{b}=-A \bm{\mu}$.
\end{theorem}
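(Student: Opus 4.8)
The plan is to use the explicit characterization of the $W$-score as the unique solution of the Poisson equation \eqref{eq:poisson} together with the centering condition ${\rm E}_{\bm{\theta}}[S_i^W]=0$: it suffices to exhibit a function of the claimed quadratic form that solves \eqref{eq:poisson} and is centered, and then uniqueness identifies it with $S_i^W$. Throughout I would write $\bm{w}=\bm{x}-\bm{\mu}$, $r=\|\Lambda\bm{w}\|$ (so $r^2=\bm{w}^{\top}\Lambda^2\bm{w}$, using $\Lambda^{\top}=\Lambda$), and $h(r)=\log g(r)$. A direct computation gives $\nabla_{\bm{x}}\log p(\bm{x},\bm{\theta})=\frac{h'(r)}{r}\Lambda^2\bm{w}$, and the key structural observation is that the shape $g$ enters \eqref{eq:poisson} \emph{only} through the scalar factor $h'(r)/r$. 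The whole strategy is to choose the quadratic coefficients so that the entire $g$-dependent contribution collapses into a single quadratic form in $\bm{w}$ that vanishes identically, leaving a purely algebraic, shape-free condition.

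For the location parameters I would first treat the candidate $S_{\mu_i}^{W}=x_i-\mu_i$. Being affine in $\bm{x}$, Lemma~\ref{lem:quad} gives $\nabla_{\bm{x}}S_{\mu_i}^{W}=\bm{e}_i$ and $\Delta_{\bm{x}}S_{\mu_i}^{W}=0$. Computing $\partial_{\mu_i}\log p=-\frac{h'(r)}{r}\bm{e}_i^{\top}\Lambda^2\bm{w}$ and substituting into \eqref{eq:poisson}, the transport term $\frac{h'(r)}{r}(\Lambda^2\bm{w})^{\top}\bm{e}_i$ cancels $\partial_{\mu_i}\log p$ by symmetry of $\Lambda^2$, while the Laplacian term is zero. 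The normalization ${\rm E}_{\bm{\theta}}[x_i-\mu_i]=0$ is exactly the first moment identity in \eqref{moment}. Hence $x_i-\mu_i$ solves the Poisson equation and is centered, so by uniqueness it is the $W$-score for $\mu_i$.

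The substantive case is $\Lambda_{ij}$. I would take the ansatz $S=\frac12\bm{x}^{\top}A\bm{x}+\bm{b}^{\top}\bm{x}+c$ with $A$ symmetric, use Lemma~\ref{lem:quad} to obtain $\nabla_{\bm{x}}S=A\bm{x}+\bm{b}$ and $\Delta_{\bm{x}}S={\rm tr}(A)$, and impose $\bm{b}=-A\bm{\mu}$ so that $\nabla_{\bm{x}}S=A\bm{w}$ is recentered at $\bm{\mu}$; the transport term then becomes $\frac{h'(r)}{r}\bm{w}^{\top}\Lambda^2 A\,\bm{w}$. I would next compute the parameter derivative $\partial_{\Lambda_{ij}}\log p=(\Lambda^{-1})_{ij}+\frac{h'(r)}{r}\,\bm{w}^{\top}\Lambda\bm{e}_i\bm{e}_j^{\top}\bm{w}$, where $\partial_{\Lambda_{ij}}\log|\Lambda|=(\Lambda^{-1})_{ij}$. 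Substituting into \eqref{eq:poisson} and grouping, the equation splits into a quadratic-in-$\bm{w}$ part carrying the factor $h'(r)/r$ and a constant part. Since $\bm{w}^{\top}M\bm{w}$ depends only on the symmetric part of $M$, the quadratic part vanishes for \emph{every} shape $g$ precisely when the symmetric part of $\Lambda^2 A+\Lambda\bm{e}_i\bm{e}_j^{\top}$ is zero; after symmetrizing the right-hand side this is exactly the Sylvester equation of the statement, whose coefficient $\Lambda^2$ is positive definite, so Lemma~\ref{lem:sylvester} provides a unique symmetric $A$. The constant part then requires ${\rm tr}(A)+(\Lambda^{-1})_{ij}=0$, which I would verify directly from the trace identity ${\rm tr}(A)=\frac12{\rm tr}(\Lambda^{-2}B)$ of Lemma~\ref{lem:sylvester}. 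Choosing $c=-{\rm E}_{\bm{\theta}}[\frac12\bm{x}^{\top}A\bm{x}+\bm{b}^{\top}\bm{x}]$ enforces centering, and uniqueness again identifies the ansatz with $S_{\Lambda_{ij}}^{W}$.

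The main obstacle is the matrix-calculus bookkeeping in the $\Lambda_{ij}$ step: differentiating $\log|\Lambda|$ and $r^2$ with respect to an entry of the symmetric matrix $\Lambda$, and then symmetrizing $\Lambda^2 A+\Lambda\bm{e}_i\bm{e}_j^{\top}$ so the quadratic form collapses onto precisely the stated Sylvester equation (one must be careful with the order of indices in the transpose terms so that the right-hand side is genuinely symmetric, as Lemma~\ref{lem:sylvester} requires). The conceptual heart, which the write-up must make precise, is that all dependence on the waveform $g$ is confined to the single scalar $h'(r)/r$; once the symmetric quadratic form is annihilated by the choice of $A$, the residual requirement ${\rm tr}(A)=-(\Lambda^{-1})_{ij}$ is purely algebraic and is supplied by the trace identity of Lemma~\ref{lem:sylvester}, making the resulting score manifestly independent of the shape and hence quadratic in $\bm{x}$.
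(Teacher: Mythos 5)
Your strategy is the same as the paper's: verify the Poisson equation \eqref{eq:poisson} directly with a quadratic ansatz, use Lemma~\ref{lem:quad} for the gradient and Laplacian, Lemma~\ref{lem:sylvester} for solvability and the trace identity, and fix the constant by centering. Your treatment of $\mu_i$ is correct. However, there is a genuine error in the $\Lambda_{ij}$ step, at exactly the point you yourself flagged as delicate. Writing $\bm{w}=\bm{x}-\bm{\mu}$, $r=\|\Lambda\bm{w}\|$, $h=\log g$, you claim
\begin{equation*}
\partial_{\Lambda_{ij}}\log p=(\Lambda^{-1})_{ij}+\frac{h'(r)}{r}\,\bm{w}^{\top}\Lambda\bm{e}_i\bm{e}_j^{\top}\bm{w}
=(\Lambda^{-1})_{ij}+\frac{h'(r)}{r}\,(\Lambda\bm{w})_i w_j,
\end{equation*}
which is what one gets by differentiating $r^2=\bm{w}^{\top}\Lambda^{\top}\Lambda\bm{w}$ along the single-entry direction $\bm{e}_i\bm{e}_j^{\top}$. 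The paper instead differentiates $r^2=\bm{w}^{\top}\Lambda^{2}\bm{w}$, using $\partial\Lambda^2/\partial\Lambda_{ij}=\Lambda\bm{e}_i\bm{e}_j^{\top}+\bm{e}_i\bm{e}_j^{\top}\Lambda=:L$, so the $g$-dependent part is $\frac{h'(r)}{2r}\bm{w}^{\top}L\bm{w}=\frac{h'(r)}{2r}\bigl[(\Lambda\bm{w})_i w_j+w_i(\Lambda\bm{w})_j\bigr]$. The two agree only for $i=j$ (or when summed over the pair $(i,j)$, $(j,i)$, i.e.\ along symmetric perturbations); off the diagonal they differ by $\frac{h'(r)}{2r}\bigl[(\Lambda\bm{w})_i w_j-w_i(\Lambda\bm{w})_j\bigr]\neq 0$. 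For instance, with $\Lambda=\mathrm{diag}(\lambda_1,\lambda_2)$, $i=1$, $j=2$, your quadratic part is $\lambda_1 w_1w_2$ while the paper's is $\tfrac{1}{2}(\lambda_1+\lambda_2)w_1w_2$.

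Consequently your symmetrized matching condition is $\Lambda^2A+A\Lambda^2=-(\Lambda\bm{e}_i\bm{e}_j^{\top}+\bm{e}_j\bm{e}_i^{\top}\Lambda)$, and your assertion that this ``is exactly the Sylvester equation of the statement'' is false for $i\neq j$: the statement's right-hand side is $-(\Lambda\bm{e}_i\bm{e}_j^{\top}+\bm{e}_i\bm{e}_j^{\top}\Lambda)$, a different matrix, and the two equations have different solutions $A$ (in the diagonal example the coefficients differ by the factor $2\lambda_1/(\lambda_1+\lambda_2)$). So as written your argument proves the stated theorem only for the diagonal scores $S^W_{\Lambda_{ii}}$, and a variant of it off the diagonal. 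The fix is to carry out the derivative with the convention the theorem encodes, $r^2=\bm{w}^{\top}\Lambda^2\bm{w}$, after which the $g$-dependent contribution collapses onto $\bm{w}^{\top}\bigl(\Lambda^2A+\tfrac12 L\bigr)\bm{w}=0$ and the stated equation emerges; your trace verification ${\rm tr}(A)=-(\Lambda^{-1})_{ij}$ goes through unchanged under either convention. Incidentally, your caution about symmetry is well placed: for $i\neq j$ the statement's right-hand side is itself non-symmetric, so Lemma~\ref{lem:sylvester}'s hypothesis does not literally deliver a symmetric $A$ there, and one should really work with the symmetric part (as the paper's verification implicitly does when it writes $\bm{w}^{\top}\Lambda^2 A\bm{w}=\tfrac12\bm{w}^{\top}(\Lambda^2A+A\Lambda^2)\bm{w}$); but that observation does not rescue the identification, since your symmetrized equation is still not the statement's.
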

\begin{proof}
	We show that the above $S^W$'s satisfy the Poisson equation \eqref{eq:poisson} directly.
	
	First, we consider the mean parameter $\mu_i$.
	From \eqref{eq:am61},
	\begin{align*}
		\frac{\partial}{\partial \mu_i} \log p(\bm{x},\bm{\theta}) = -\frac{\partial}{\partial x_i} \log p(\bm{x},\bm{\theta}) = -\nabla_{\bm{x}} \log p(\bm{x},\bm{\theta})^{\top} \bm{e}_i.
	\end{align*}
	Also, from Lemma~\ref{lem:quad},
	\begin{align*}
		\nabla_{\bm{x}} (x_i-\mu_i) = \bm{e}_i, \quad \Delta_{\bm{x}} (x_i-\mu_i) = 0.
	\end{align*}
	Therefore,
	\begin{align*}
		\nabla_{\bm{x}} \log p ({\bm{x}}, {\bm{\theta}})^{\top} \nabla_{\bm{x}} (x_i-\mu_i) + \Delta_{\bm{x}} (x_i-\mu_i) +
		\frac{\partial}{\partial \mu_i} \log
		p({\bm{x}}, {\bm{\theta}}) = 0.
	\end{align*}
	Thus, the Wasserstein score function for the mean parameter $\mu_i$ is
	\begin{align*}
		S_{\mu_i}^{{W}} (\bm{x},\bm{\theta}) = x_i-\mu_i.
	\end{align*}
	
	Next, we consider the deformation parameter $\Lambda_{ij}$.
	Since
	\begin{align*}
		\frac{\partial}{\partial \Lambda_{ij}} \| \Lambda (\bm{x}-\bm{\mu}) \| &= \frac{1}{2} \| \Lambda (\bm{x}-\bm{\mu}) \|^{-1} \frac{\partial}{\partial \Lambda_{ij}} (\bm{x}-\bm{\mu})^{\top} \Lambda^2 (\bm{x}-\bm{\mu}) \\
		&= \frac{1}{2} \| \Lambda (\bm{x}-\bm{\mu}) \|^{-1} (\bm{x}-\bm{\mu})^{\top} \frac{\partial \Lambda^2}{\partial \Lambda_{ij}} (\bm{x}-\bm{\mu}) \\
		&= \frac{1}{2} \| \Lambda (\bm{x}-\bm{\mu}) \|^{-1} (\bm{x}-\bm{\mu})^{\top} (\Lambda \bm{e}_i \bm{e}_j^{\top} + \bm{e}_i \bm{e}_j^{\top} \Lambda) (\bm{x}-\bm{\mu}),
	\end{align*}
	we have
	\begin{align*}
		\frac{\partial}{\partial \Lambda_{ij}} \log p(\bm{x}, \theta) &= \frac{\partial}{\partial \Lambda_{ij}} \log \det \Lambda + \frac{\partial}{\partial \Lambda_{ij}} \log g (\| \Lambda (\bm{x}-\bm{\mu}) \|) \\
		&= (\Lambda^{-1})_{ij} + \frac{g' (\| \Lambda (\bm{x}-\bm{\mu}) \|)}{g (\| \Lambda (\bm{x}-\bm{\mu}) \|)} \frac{\partial}{\partial \Lambda_{ij}} \| \Lambda (\bm{x}-\bm{\mu}) \| \\
		&= (\Lambda^{-1})_{ij} + \frac{g' (\| \Lambda (\bm{x}-\bm{\mu}) \|)}{2 \| \Lambda (\bm{x}-\bm{\mu}) \| g (\| \Lambda (\bm{x}-\bm{\mu}) \|)} (\bm{x}-\bm{\mu})^{\top} (\Lambda \bm{e}_i \bm{e}_j^{\top} + \bm{e}_i \bm{e}_j^{\top} \Lambda) (\bm{x}-\bm{\mu}) \\
		&= - \frac{g' (\| \Lambda (\bm{x}-\bm{\mu}) \|)}{\| \Lambda (\bm{x}-\bm{\mu}) \| g (\| \Lambda (\bm{x}-\bm{\mu}) \|)} \left( -\frac{1}{2} \bm{x}^{\top} L \bm{x} + \bm{x}^{\top} L \bm{\mu} - \frac{1}{2} \bm{\mu}^{\top} L \bm{\mu} \right) + (\Lambda^{-1})_{ij},
	\end{align*}
	where $L=\Lambda \bm{e}_i \bm{e}_j^{\top} + \bm{e}_i \bm{e}_j^{\top} \Lambda$.
	Let 
	\begin{align*}
		S(\bm{x}) = \frac{1}{2} \bm{x}^{\top} A \bm{x} + \bm{b}^{\top} \bm{x} - {\rm E}_{\theta} \left[ \frac{1}{2} \bm{x}^{\top} A \bm{x} + \bm{b}^{\top} \bm{x} \right],
	\end{align*}
	where $A$ is the unique solution of the Sylvester equation $\Lambda^2 A + A \Lambda^2 = -L$ and $\bm{b}=-A \bm{\mu}$.
	From Lemma~\ref{lem:quad} and Lemma~\ref{lem:sylvester},
	\begin{align*}
		\Delta_{\bm{x}} S(\bm{x}) = {\rm tr} A = -\frac{1}{2} {\rm tr} (\Lambda^{-2} L) = -\frac{1}{2} {\rm tr} (\Lambda^{-2} (\Lambda \bm{e}_i \bm{e}_j^{\top} + \bm{e}_i \bm{e}_i^{\top} \Lambda)) = -(\Lambda^{-1})_{ij}.
	\end{align*}
	Also,
	\begin{align*}
		&\nabla_{\bm{x}} \log p ({\bm{x}}, {\bm{\theta}})^{\top} \nabla_{\bm{x}} S(\bm{x}) \\ 
		=& \frac{g' (\| \Lambda (\bm{x}-\bm{\mu}) \|)}{g (\| \Lambda (\bm{x}-\bm{\mu}) \|)} \nabla_{\bm{x}} (\| \Lambda (\bm{x}-\bm{\mu}) \|)^{\top} (A\bm{x}+\bm{b}) \\
		=& \frac{g' (\| \Lambda (\bm{x}-\bm{\mu}) \|)}{\| \Lambda (\bm{x}-\bm{\mu}) \| g (\| \Lambda (\bm{x}-\bm{\mu}) \|)} (\Lambda^2 (\bm{x}-\bm{\mu}))^{\top} (A\bm{x}+\bm{b}) \\
		=& \frac{g' (\| \Lambda (\bm{x}-\bm{\mu}) \|)}{\| \Lambda (\bm{x}-\bm{\mu}) \| g (\| \Lambda (\bm{x}-\bm{\mu}) \|)} \left( \frac{1}{2} \bm{x}^{\top} (\Lambda^2 A + A \Lambda^2) \bm{x} + \bm{x}^{\top} (\Lambda^2 \bm{b}-A \Lambda^2 \bm{\mu}) -\bm{\mu}^{\top} \Lambda^2 \bm{b} \right) \\
		=& \frac{g' (\| \Lambda (\bm{x}-\bm{\mu}) \|)}{\| \Lambda (\bm{x}-\bm{\mu}) \| g (\| \Lambda (\bm{x}-\bm{\mu}) \|)} \left( -\frac{1}{2} \bm{x}^{\top} L \bm{x} + \bm{x}^{\top} L \bm{\mu} - \frac{1}{2} \bm{\mu}^{\top} L \bm{\mu} \right),
	\end{align*}
	where we used
	\begin{align*}
		\nabla_{\bm{x}} (\| \Lambda (\bm{x}-\bm{\mu}) \|) &= \frac{1}{2 \| \Lambda (\bm{x}-\bm{\mu}) \|} \nabla_{\bm{x}} (\| \Lambda (\bm{x}-\bm{\mu}) \|^2) \\
		&= \frac{1}{2 \| \Lambda (\bm{x}-\bm{\mu}) \|} \nabla_{\bm{x}} ((\bm{x}-\bm{\mu})^{\top} \Lambda^2 (\bm{x}-\bm{\mu})) \\
		&= \frac{1}{\| \Lambda (\bm{x}-\bm{\mu}) \|} \Lambda^2 (\bm{x}-\bm{\mu}).
	\end{align*}
	Therefore,
	\begin{align*}
		\nabla_{\bm{x}} \log p ({\bm{x}}, {\bm{\theta}})^{\top} \nabla_{\bm{x}} S(\bm{x}) + \Delta_{\bm{x}} S(\bm{x}) +
		\frac{\partial}{\partial \Lambda_{ij}} \log
		p({\bm{x}}, {\bm{\theta}}) = 0,
	\end{align*}
	which means that $S(\bm{x})$ is the Wasserstein score function for $\Lambda_{ij}$.
	
\end{proof}


Now, we consider the Wasserstein estimator $\hat{\bm{\theta}}_{W}$ defined as the zero of the Wasserstein score function \cite{LZ2019}.

\begin{corollary}\upshape \label{cor_ellip}
	Suppose that we have $n$ independent observations $\bm{x}_1,\dots,\bm{x}_n$ from the elliptically symmetric deformation model \eqref{eq:am61} where $n \geq d$.
	Then, the Wasserstein estimator $\hat{\bm{\theta}}_W=(\hat{\bm{\mu}}_W,\hat{\Lambda}_W)$ is the second-order moment estimator given by
	\begin{align*}
		\hat{\bm{\mu}}_W = \frac{1}{n} \sum_{t=1}^n \bm{x}_t, \quad \hat{\Lambda}_W = \left( \frac{1}{n} \sum_{t=1}^n (\bm{x}_t-\hat{\bm{\mu}}_W) (\bm{x}_t-\hat{\bm{\mu}}_W)^{\top} \right)^{-1/2},
	\end{align*}
	irrespective of the waveform $f(\bm{z})=g(\| \bm{z} \|)$.  
\end{corollary}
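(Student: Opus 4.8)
The plan is to substitute the explicit Wasserstein score functions of Theorem~\ref{theorem_moment} into the estimating equations \eqref{eq:am4420230516} and reduce them to matrix identities by means of the moment identities \eqref{moment}. The mean parameters are immediate: the equation $\frac1n\sum_{t=1}^n S_{\mu_i}^{{W}}(\bm{x}_t,\hat{\bm{\theta}}) = \frac1n\sum_{t=1}^n (x_{t,i}-\hat{\mu}_i) = 0$ is solved exactly by $\hat{\bm{\mu}}_W = \frac1n\sum_{t=1}^n \bm{x}_t$, independently of the remaining parameters.

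For the deformation parameters I would first put the quadratic score into centered form. Writing $\hat{S} = \frac1n\sum_{t=1}^n (\bm{x}_t-\hat{\bm{\mu}}_W)(\bm{x}_t-\hat{\bm{\mu}}_W)^{\top}$ and using $\bm{b}=-A\bm{\mu}$ to complete the square, $\frac12\bm{x}^{\top}A\bm{x}+\bm{b}^{\top}\bm{x} = \frac12(\bm{x}-\bm{\mu})^{\top}A(\bm{x}-\bm{\mu}) - \frac12\bm{\mu}^{\top}A\bm{\mu}$, so the additive constant cancels between the empirical average and the model expectation ${\rm E}_{\hat{\theta}}[\cdot]$ subtracted in the definition of the score. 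The estimating equation for $\Lambda_{ij}$ then collapses to ${\rm tr}(A\hat{S}) = {\rm tr}\big(A\,{\rm E}_{\hat{\theta}}[(\bm{x}-\bm{\mu})(\bm{x}-\bm{\mu})^{\top}]\big)$; by the second identity in \eqref{moment}, and since $\Lambda$ is symmetric here so that $\Lambda^{-1}(\Lambda^{-1})^{\top}=\Lambda^{-2}$, the model covariance at $\hat{\bm{\theta}}$ equals $\hat{\Lambda}_W^{-2}$. Hence each estimating equation reads ${\rm tr}(AM)=0$ with $M := \hat{S}-\hat{\Lambda}_W^{-2}$, where $A$ depends on $(i,j)$ through the Sylvester equation $\hat{\Lambda}_W^2 A + A\hat{\Lambda}_W^2 = -L$, $L = \hat{\Lambda}_W\bm{e}_i\bm{e}_j^{\top}+\bm{e}_i\bm{e}_j^{\top}\hat{\Lambda}_W$.

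The crux is to show that this whole family of scalar conditions, as $(i,j)$ ranges, forces $M=0$ rather than merely constraining it in one direction; the obstacle is exactly that each individual equation ${\rm tr}(AM)=0$ is only a single linear constraint, so I must exploit how $A$ varies with $(i,j)$. The plan is a duality argument reusing Lemma~\ref{lem:sylvester}. Since $\hat{\Lambda}_W^2$ is positive definite and $M$ is symmetric, Lemma~\ref{lem:sylvester} gives a unique symmetric $N$ with $\hat{\Lambda}_W^2 N + N\hat{\Lambda}_W^2 = M$. Pairing the Sylvester equation for $A$ against $N$ under the trace and using cyclicity yields ${\rm tr}(AM) = {\rm tr}\big(A(\hat{\Lambda}_W^2 N + N\hat{\Lambda}_W^2)\big) = -{\rm tr}(LN)$, and a direct expansion gives ${\rm tr}(LN) = (N\hat{\Lambda}_W + \hat{\Lambda}_W N)_{ji}$. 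Thus the estimating equations are equivalent to $(N\hat{\Lambda}_W+\hat{\Lambda}_W N)_{ji}=0$ over all parameter indices; as this symmetric matrix then vanishes entrywise, $N\hat{\Lambda}_W+\hat{\Lambda}_W N=0$, and the uniqueness half of Lemma~\ref{lem:sylvester} (right-hand side $0$) forces $N=0$, whence $M=0$. Therefore $\hat{S}=\hat{\Lambda}_W^{-2}$, i.e. $\hat{\Lambda}_W = \hat{S}^{-1/2}$, the claimed moment estimator; the assumption $n\ge d$ is used only to ensure $\hat{S}$ is positive definite so that its symmetric positive-definite inverse square root exists and $\hat{\Lambda}_W$ is a legitimate parameter value.
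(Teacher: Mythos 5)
Your proof is correct, and it follows the same skeleton as the paper's but is genuinely more complete at the decisive step. The paper's own proof substitutes the scores of Theorem~\ref{theorem_moment} into the estimating equations \eqref{eq:am4420230516}, solves the mean equations, and then simply \emph{asserts} that the $\Lambda$-equations ``imply that the second-order empirical moments match the second-order population moments,'' invoking \eqref{moment}; it never explains why the family of scalar constraints over all $(i,j)$ pins down the full matrix. You correctly isolate this as the crux---each equation ${\rm tr}(AM)=0$ is only one linear constraint---and your duality argument closes the gap cleanly: solve $\hat{\Lambda}_W^2 N + N \hat{\Lambda}_W^2 = M$ for symmetric $N$ via Lemma~\ref{lem:sylvester}, pair under the trace to get ${\rm tr}(AM) = -{\rm tr}(LN) = -(N\hat{\Lambda}_W + \hat{\Lambda}_W N)_{ji}$, and invoke uniqueness in Lemma~\ref{lem:sylvester} a second time (with right-hand side $0$) to force $N=0$, hence $M=0$. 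Equivalently, your computation shows that the matrices $A$ span the symmetric matrices as $(i,j)$ varies, since $X \mapsto \hat{\Lambda}_W^2 X + X \hat{\Lambda}_W^2$ is a bijection of the symmetric matrices and the symmetrized right-hand sides $L_{ij}+L_{ji}$ span them; this is exactly the justification the paper leaves implicit, so your route buys rigor at no extra cost. Two minor points: the cancellation of constants when you complete the square relies on the mean and $\Lambda$ equations being solved simultaneously, so that $\hat{S}$ is centered at the sample mean---you use this correctly but it deserves an explicit word; and your remark that $n \ge d$ ensures $\hat{S}$ is positive definite is slightly off, since centering loses one degree of freedom (for $n=d$ the centered covariance is always singular, and $n \ge d+1$ gives invertibility only almost surely)---though this imprecision is inherited from the corollary's statement itself rather than introduced by you.
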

\begin{proof}
	From Theorem~\ref{theorem_moment}, the Wasserstein estimator is the solution of
	\begin{align}
		\frac{1}{n} \sum_{t=1}^n S_{\mu_i}^{{W}} (\bm{x}_t,\bm{\theta}) = \frac{1}{n} \sum_{t=1}^n  ((\bm{x}_t)_i-\mu_i) = 0 \label{est_mu}
	\end{align}
	for $i=1,\dots,d$ and
	\begin{align}
		\frac{1}{n} \sum_{t=1}^n S_{\Lambda_{ij}}^W(\bm{x}_t,\bm{\theta}) = \frac{1}{n} \sum_{t=1}^n \left( \frac{1}{2} \bm{x}_t^{\top} A \bm{x}_t + \bm{b}^{\top} \bm{x}_t \right) - {\rm E}_{\bm{\theta}} \left[ \frac{1}{2} \bm{x}^{\top} A \bm{x} + \bm{b}^{\top} \bm{x} \right] = 0 \label{est_lambda}
	\end{align}
	for $i,j=1,\dots,d$, where $A$ is the unique solution of the Sylvester equation $\Lambda^2 A + A \Lambda^2 = -\Lambda \bm{e}_i \bm{e}_j^{\top} - \bm{e}_i \bm{e}_j^{\top} \Lambda$ and $\bm{b}=-A \bm{\mu}$.
	(Note that the Wasserstein score function is a function from $\mathbb{R}^d$ to $\mathbb{R}$ and does not depend on $n$.)
	
	From \eqref{est_mu}, the Wasserstein estimator of $\bm{\mu}$ is 
	\begin{align*}
		\hat{\bm{\mu}}_W = \frac{1}{n} \sum_{t=1}^n \bm{x}_t.
	\end{align*}
	Also, since \eqref{est_lambda} implies that the second-order empirical moments match the second-order population moments and ${\rm Cov} [\bm{x}] = \Lambda^{-2}$ from \eqref{moment}, the Wasserstein estimator of $\Lambda$ is
	\begin{align*}
		\hat{\Lambda}_W = \left( \frac{1}{n} \sum_{t=1}^n (\bm{x}_t-\hat{\bm{\mu}}_W) (\bm{x}_t-\hat{\bm{\mu}}_W)^{\top} \right)^{-1/2}.
	\end{align*}
\end{proof}

\begin{theorem}\upshape
	When $f$ is Gaussian, the $F$-estimator $\hat{\bm{\theta}}_F$ and the $W$-estimator $\hat{\bm{\theta}}_W$ are identical.
\end{theorem}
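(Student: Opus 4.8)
The plan is to compute the $F$-estimator (maximum likelihood estimator) explicitly for the Gaussian model and compare it with the $W$-estimator already determined in Corollary~\ref{cor_ellip}. When $f$ is Gaussian we have $g(r)=(2\pi)^{-d/2}e^{-r^2/2}$, so the model $\mathcal{M}_f$ in \eqref{eq:am61} is exactly the family of nondegenerate multivariate normal distributions $N(\bm{\mu}, \Lambda^{-2})$. Recall that $\Lambda$ is restricted to be symmetric and positive definite in this section, so $\Sigma=\Lambda^{-2}$ ranges over all symmetric positive definite covariances and the correspondence $\Lambda \leftrightarrow \Sigma$ is a smooth bijection.

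First I would write the log-likelihood for the $n$ observations as
\begin{equation*}
	\ell(\bm{\mu}, \Lambda) = n\log|\Lambda| - \frac{1}{2}\sum_{t=1}^n (\bm{x}_t - \bm{\mu})^{\top} \Lambda^2 (\bm{x}_t - \bm{\mu}) + \mathrm{const}.
\end{equation*}
Setting $\partial \ell/\partial \bm{\mu}=0$ gives $\sum_t \Lambda^2(\bm{x}_t-\bm{\mu})=0$, and since $\Lambda^2$ is invertible this yields $\hat{\bm{\mu}}_F=\frac{1}{n}\sum_{t=1}^n \bm{x}_t$, the sample mean. Next I would determine $\hat{\Lambda}_F$. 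The cleanest route is to use the well-known maximum likelihood estimate of the covariance for the multivariate Gaussian, namely $\hat{\Sigma}_F=\frac{1}{n}\sum_{t=1}^n (\bm{x}_t-\hat{\bm{\mu}}_F)(\bm{x}_t-\hat{\bm{\mu}}_F)^{\top}$, together with the invariance of the MLE under the reparametrization $\Sigma=\Lambda^{-2}$; this gives $\hat{\Lambda}_F=\hat{\Sigma}_F^{-1/2}$. Alternatively one may differentiate $\ell$ directly with respect to the symmetric positive-definite matrix $\Lambda$, using $\partial \log|\Lambda|/\partial \Lambda = \Lambda^{-1}$ and the chain rule for $\Lambda^2$, and solve the resulting matrix equation. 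Comparing with Corollary~\ref{cor_ellip}, both $\hat{\bm{\mu}}_F$ and $\hat{\Lambda}_F$ coincide exactly with $\hat{\bm{\mu}}_W$ and $\hat{\Lambda}_W$, which proves the claim (the well-definedness of $\hat{\Sigma}_F$ uses the assumption $n\ge d$ as in the corollary).

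I do not expect a serious obstacle, since both estimators are available in closed form. The one point worth emphasizing is that the coincidence is \emph{not} because the two score functions agree: for the Gaussian the Fisher score for $\mu_i$ is $(\Lambda^2(\bm{x}-\bm{\mu}))_i$, whereas the Wasserstein score is $x_i-\mu_i$ by Theorem~\ref{theorem_moment}, and these differ unless $\Lambda=I$. Rather, the two systems of estimating equations have the same solutions because left-multiplication by the invertible matrix $\Lambda^2$ (in the equation for $\bm{\mu}$) does not change the zero of the sum, and because the Gaussian MLE of the second moment matches the empirical second moment exactly as in \eqref{est_lambda}. The only computational care needed is the matrix differentiation with respect to the symmetric positive-definite $\Lambda$, which the reparametrization argument sidesteps entirely.
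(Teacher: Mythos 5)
Your proposal is correct and follows essentially the same route as the paper, whose entire proof is the observation that the Gaussian maximum likelihood estimator is the second-order moment estimator, hence coincides with $\hat{\bm{\theta}}_W$ from Corollary~\ref{cor_ellip}. You simply spell out the details the paper leaves implicit (the explicit likelihood equations, the reparametrization $\Sigma=\Lambda^{-2}$ with MLE invariance, and the apt remark that the $F$- and $W$-scores differ while their zero sets agree), all of which is sound.
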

\begin{proof}
	It is well known that the $F$-estimator (maximum likelihood estimator) for the Gaussian model is given by the second-order moment estimator.
\end{proof}

Note that \cite{Gelbrich} showed that the $L^2$-Wasserstein divergence for the elliptically symmetric deformation model \eqref{eq:am61} does not depend on the waveform, and is given by using the Bures--Wasserstein divergence between positive definite matrices \cite{Bhatia19}:
\begin{align*}
	D_W(p(\bm{x},\bm{\theta}_1),p(\bm{x},\bm{\theta}_2)) = \| \bm{\mu}_1 - \bm{\mu}_2 \|^2 + {\rm tr} (\Lambda_1^{-2} + \Lambda_2^{-2} -  2 (\Lambda_1^{-1} \Lambda_2^{-2} \Lambda_1^{-1})^{1/2}).
\end{align*}
It is an interesting future problem to derive the Wasserstein score function and Wasserstein estimator for general affine deformation models.

Regarding the geometric structure of the elliptically symmetric deformation model \eqref{eq:am61}, we obtain the following.
See Figure~\ref{fig1}.

\begin{theorem}\upshape
	When $f$ is spherically symmetric, the model $\mathcal{M}_f$ is orthogonal in $L^2$ to ${\mathcal{F}}_S$ at the origin ${\bm{\mu}}=0, \Lambda=I$ of $\mathcal{M}_f$ with respect to the Wasserstein metric.
\end{theorem}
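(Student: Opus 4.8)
The plan is to work entirely in Otto's representation of the Wasserstein tangent space. At any $p$, a tangent vector $\delta p$ is encoded by its velocity potential $\Phi$ through $\delta p = -\Delta_p \Phi$, and for two tangent vectors with potentials $\Phi_1,\Phi_2$ the Wasserstein inner product is, by the integration-by-parts identity \eqref{eq:am3520210118},
\[
\langle \delta p_1, \delta p_2 \rangle_W = -\int \Phi_1 \Delta_p \Phi_2 \, {\rm d}\bm{x} = \int p \, \nabla_{\bm{x}}\Phi_1^{\top} \nabla_{\bm{x}}\Phi_2 \, {\rm d}\bm{x},
\]
exactly as in \eqref{WIM}. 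At the origin $\bm{\mu}=0,\Lambda=I$ we have $p(\bm{x},\bm{\theta})=f(\bm{x})$, so orthogonality of $\mathcal{M}_f$ and $\mathcal{F}_S$ reduces to showing that their velocity fields are orthogonal in the weighted space $L^2(f)$. The whole argument then consists of identifying these two families of velocity fields and pairing them.

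First I would read off the tangent space of $\mathcal{M}_f$ from Theorem~\ref{theorem_moment}. Evaluating the Wasserstein scores at $\bm{\mu}=0,\Lambda=I$ gives $\nabla_{\bm{x}} S^W_{\mu_i}=\bm{e}_i$ and $\nabla_{\bm{x}} S^W_{\Lambda_{ij}}=A^{(ij)}\bm{x}$ (here $\bm{b}=-A\bm{\mu}=0$), where $A^{(ij)}$ is the symmetric solution of the Sylvester equation, which at $\Lambda=I$ reduces to $2A^{(ij)}=-(\bm{e}_i\bm{e}_j^{\top}+\bm{e}_j\bm{e}_i^{\top})$. As $(i,j)$ ranges over $i\le j$ these span all symmetric matrices, so the tangent space of $\mathcal{M}_f$ at the origin consists precisely of the affine velocity fields $\bm{c}+M\bm{x}$ with $\bm{c}\in\bm{R}^d$ and $M=M^{\top}$. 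Next I would characterize the tangent space of $\mathcal{F}_S$. A deviation $\delta f$ with potential $\psi$ stays in $\mathcal{F}_S$ iff it preserves the defining moment constraints \eqref{eq:am3} and \eqref{eq:am4}; substituting $\delta f=-\nabla_{\bm{x}}\cdot(f\nabla_{\bm{x}}\psi)$ and integrating by parts converts these into conditions on $\psi$ alone,
\[
{\rm E}[\partial_k\psi]=0 \quad (k=1,\dots,d), \qquad {\rm E}[z_k\partial_l\psi+z_l\partial_k\psi]=0 \quad (k,l=1,\dots,d),
\]
that is, ${\rm E}[\nabla_{\bm{x}}\psi]=0$ together with the vanishing of the symmetrized matrix ${\rm E}[\bm{z}(\nabla_{\bm{x}}\psi)^{\top}+(\nabla_{\bm{x}}\psi)\bm{z}^{\top}]$.

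Finally I would just pair the two families. For an $\mathcal{M}_f$ potential $\Phi_{\mathcal M}=\bm{c}+M\bm{x}$ with $M$ symmetric and an $\mathcal{F}_S$ potential $\psi$,
\[
\langle \Phi_{\mathcal M},\psi\rangle_W=\int f\,(\bm{c}+M\bm{x})^{\top}\nabla_{\bm{x}}\psi\,{\rm d}\bm{x} =\bm{c}^{\top}{\rm E}[\nabla_{\bm{x}}\psi]+\sum_{k,l}M_{kl}\,{\rm E}[x_l\partial_k\psi].
\]
The first term vanishes by the first-moment condition; using $M=M^{\top}$ the second equals $\tfrac12\sum_{k,l}M_{kl}{\rm E}[x_l\partial_k\psi+x_k\partial_l\psi]$, which vanishes by the second-moment condition. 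Hence every direction of $\mathcal{M}_f$ is $W$-orthogonal to every direction of $\mathcal{F}_S$, which is the claim. The main obstacle I anticipate is the rigorous conversion of the moment constraints into the potential constraints in the middle step: this needs the identity \eqref{eq:am3520210118} together with enough decay of $f\nabla_{\bm{x}}\psi$ to kill the boundary terms, and care that the derivative of \eqref{eq:am4} is taken respecting the symmetry of the second-moment matrix so that the symmetric structure of $M$ is matched exactly. Note that spherical symmetry of $f$ enters only through Theorem~\ref{theorem_moment}, which supplies the affine form of the $\mathcal{M}_f$ directions; the $\mathcal{F}_S$ side uses only the moment constraints.
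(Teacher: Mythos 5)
Your proof is correct and takes essentially the same route as the paper: both arguments combine Theorem~\ref{theorem_moment} (the $W$-scores of $\mathcal{M}_f$ are quadratic in $\bm{x}$) with the fact that tangent vectors of $\mathcal{F}_S$ preserve the moment constraints \eqref{eq:am2}--\eqref{eq:am4}. The paper merely stops one integration by parts earlier, pairing the quadratic potential $S^W$ directly against $\delta p$ in plain $L^2$ (so $\int S^W \delta p \, {\rm d}\bm{x} = 0$ because $\delta p$ annihilates all quadratics), whereas you write the identical pairing on the velocity side in $L^2(f)$ after transferring the constraints to ${\rm E}[\nabla_{\bm{x}}\psi]=0$ and ${\rm E}[z_k\partial_l\psi + z_l\partial_k\psi]=0$.
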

\begin{proof}\upshape
	Let $\delta p({\bm{x}})$ be a tangent vector of ${\mathcal{F}}_S$ at the origin.
	Since all $p({\bm{x}})$ in  ${\mathcal{F}}_S$ satisfy the standard conditions (\ref{eq:am2}), (\ref{eq:am3}), and (\ref{eq:am4}), $\delta p({\bm{x}})$ is orthogonal to any quadratic function of ${\bm{x}}$.  
	Since the $W$-score functions of $\mathcal{M}_f$ are quadratic functions from Theorem~\ref{theorem_moment}, it implies that $\delta p({\bm{x}})$ is orthogonal to the basis functions of the tangent space of $\mathcal{M}_f$, with respect to the Wasserstein metric.
\end{proof}

Here, we discuss the relation between the current Wasserstein estimator $\hat{{\bm \theta}}_{W}$ and the estimator $\hat{{\bm \theta}}_{Wp}$ in \eqref{am_est} defined as the projection of the empirical distribution with respect to the Wasserstein distance.
For the one-dimensional location-scale model, \cite{AM2022} studied the estimator $\hat{{\bm \theta}}_{Wp}$ in \eqref{am_est} by using the order statistics $x_{(i)}$.  
This estimator minimizes the Wasserstein distance between the empirical distribution and the model.
Here, we show that this estimator $\hat{\bm{\theta}}_{Wp}=(\hat{\mu}_{Wp},\hat{\sigma}_{Wp})$ is asymptotically equivalent to the Wasserstein estimator $\hat{{\bm \theta}}_{W}$, which coincides with the second-order moment estimator from Theorem~\ref{theorem_moment}.  
We assume $\mu=0$ without loss of generality.  
The estimator \eqref{am_est} of the location is 
\begin{equation*}
	\hat{\mu}_{Wp} = \frac 1n \sum_{i=1}^n x_{(i)} =
	\frac 1n \sum_{i=1}^n x_i,
\end{equation*}
which is the empirical mean and coincides with the moment estimator.  
Also, the estimator \eqref{am_est} of the scale is
\begin{equation*}
	\hat{\sigma}_{Wp} = \sum_{i=1}^n k_i x_{(i)},
\end{equation*}
where
\begin{equation*}
	k_i = \int^{z_i}_{z_{i-1}} zf(z) dz.
\end{equation*}
Here, $z_i$ is the $i$-th equipartition point of $f(z)$ defined by
\begin{equation*}
	z_i = F^{-1} \left( \frac{i}{n} \right),
\end{equation*}
where $F$ is the cumulative distribution function of $f(z)$.  
From $\mu=0$, we have $x_{(i)} = \sigma z_i + O_p(n^{-1/2})$ asymptotically \citep{David}. 
Hence, 
\begin{equation*}
	k_i \approx \frac 1n z_i \approx \frac 1n \frac{x_{(i)}}{\sigma},
\end{equation*}
which leads to
\begin{equation*}
	\hat{\sigma}_{Wp} = \sum_{i=1}^n k_i x_{(i)} \approx \frac{1}{n \sigma} \sum_{i=1}^n x^2_{(i)} = \frac{1}{n \sigma} \sum_{i=1}^n x^2_{i}.
\end{equation*}
Since $\hat{\sigma}_{Wp} \approx \sigma$ asymptotically,
\begin{equation*}
	\hat{\sigma}_{Wp}^2 \approx \frac{1}{n} \sum_{i=1}^n x^2_{i}.
\end{equation*}
This shows that $\hat{\bm{\theta}}_{Wp}=(\hat{{\bm \mu}}_{Wp},\hat{\sigma}_{Wp})$ asymptotically coincides with the second-order moment estimator $\hat{{\bm \theta}}_{W}=(\hat{{\bm \mu}}_{W},\hat{\sigma}_{W})$.

\section{Wasserstein covariance and robustness}\label{sec:robust}
Following \cite{LZ2019}, we define the Wasserstein covariance ($W$-covariance) matrix ${\rm Var}_{\theta}^{\mathrm{W}} [\hat{\bm{\theta}}]$ of an estimator $\hat{\bm{\theta}}$ by the positive semidefinite matrix given by
\begin{align}
	{\rm Var}_{\theta}^{\mathrm{W}} [\hat{\bm{\theta}}] = ({\rm E}_{\theta} [ (\nabla_{\bm{x}} \hat{{\theta}}_a)^{\top} (\nabla_{\bm{x}} \hat{{\theta}}_b) ])_{ab}, \label{Wcov}
\end{align}
where $\nabla_{\bm{x}} \hat{\bm{\theta}}$ is assumed to be square integrable.
\citet{LZ2019} showed the Wasserstein--Cramer--Rao inequality
\begin{equation}
	\label{eq:WCR}
	{\rm Var}_{\theta}^{\mathrm{W}} (\hat{\bm{\theta}}) \succeq \left( \frac{\partial}{\partial \theta} {\rm E}_{\theta} [\hat{\bm{\theta}}] \right)^{\top} G_{\mathrm{W}} (\theta)^{-1} \left( \frac{\partial}{\partial \theta} {\rm E}_{\theta} [\hat{\bm{\theta}}] \right),
\end{equation}  
where
\begin{align*}
	\frac{\partial}{\partial \theta} {\rm E}_{\theta} [\hat{\bm{\theta}}] := \left( \frac{\partial}{\partial \theta_b} {\rm E}_{\theta} [ \hat{\bm{\theta}}_a ] \right)_{ab}.
\end{align*}
A consistent estimator $\hat{\bm{\theta}}$ is said to be Wasserstein efficient ($W$-efficient) if its Wasserstein covariance asymptotically satisfies \eqref{eq:WCR} with equality.   
We give a proof of the Wasserstein--Cramer--Rao inequality based on the Cauchy--Schwarz inequality in the Appendix.

We show that the Wasserstein covariance of an estimator can be viewed as a measure of robustness against additive noise.  
Suppose that $X \sim p ({\bm{x}}, {\bm{\theta}})$ and we estimate $\bm{\theta}$ from the noisy observation $\widetilde{X}=X+Z$, where $Z$ is independent from $X$, ${\rm E} [Z] = 0$ and ${\rm Var} [Z] = \sigma^2 I$ with $\sigma^2$ sufficiently small.
The probability density of $\widetilde{X}$ is given by $\widetilde{p}({\bm{x}}, {\bm{\theta}})=p ({\bm{x}}, {\bm{\theta}}) \ast q(\bm{x})$, where $\ast$ is the convolution and $q$ is the probability density of the noise $Z$. 
Namely, the noise changes the waveform from $p$ to $\widetilde{p}$. 
Generally, the estimator degrades when the noise is added. 
Here, we quantify the robustness of an estimator against noise based on how much its variance increases due to noise. 
Namely, we focus on ${{\rm Var}_{\theta} [\hat{\bm{\theta}} (X+Z)] - {\rm Var}_{\theta} [\hat{\bm{\theta}}(X)]}$, which does not depend on the exact distribution of $Z$ but only its first two moments.
If this quantity is small, it implies that the estimator is not much affected by noise contamination, which can be viewed as its robustness.
This quantity is closely related to the Wasserstein covariance as follows.

\begin{theorem}\label{th_cov}
	The Wasserstein covariance satisfies
	\begin{align*}
		{\rm Var}^{\mathrm{W}}_{\theta} [\hat{\bm{\theta}}]_{ab} =& \lim_{\sigma^2 \to 0} \frac{{\rm Var}_{\theta} [\hat{\bm{\theta}} (X+Z)]_{ab} - {\rm Var}_{\theta} [\hat{\bm{\theta}}(X)]_{ab}}{\sigma^2} \\
		& \quad - \frac{1}{2} \left( {\rm Cov}_{\theta} [ \hat{{\theta}}_a (X), \Delta \hat{{\theta}}_b (X) ] + {\rm Cov}_{\theta} [ \hat{{\theta}}_b (X), \Delta \hat{{\theta}}_a (X) ] \right),
	\end{align*}
	where $\Delta$ is the Laplacian.
	In particular, if $\Delta \hat{\bm{\theta}}_a (X)$ is constant for every $a$ (e.g. $\hat{\bm{\theta}}$ is quadratic in $\bm{x}$), then
	\begin{equation*}
		{\rm Var}^{\mathrm{W}}_{\theta} [\hat{\bm{\theta}}] = \lim_{\sigma^2 \to 0} \frac{{\rm Var}_{\theta} [\hat{\bm{\theta}} (X+Z)] - {\rm Var}_{\theta} [\hat{\bm{\theta}}(X)]}{\sigma^2}.
	\end{equation*}
\end{theorem}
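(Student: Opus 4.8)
The plan is to expand the estimator in the small noise $Z$ and extract the coefficient of $\sigma^2$ in the perturbed covariance; the whole statement is a second-order perturbation computation. For each component $a$ I would write
\[
\hat\theta_a(X+Z) = \hat\theta_a(X) + (\nabla_{\bm x}\hat\theta_a(X))^\top Z + \tfrac12 Z^\top (\nabla_{\bm x}^2 \hat\theta_a(X)) Z + r_a(X,Z),
\]
where $\nabla_{\bm x}^2$ is the Hessian and $r_a$ collects the cubic-and-higher remainder. Taking the conditional expectation over $Z$ given $X$ and using ${\rm E}[Z]=0$, ${\rm E}[ZZ^\top]=\sigma^2 I$ together with ${\rm tr}(\nabla_{\bm x}^2\hat\theta_a)=\Delta\hat\theta_a$ yields the first-order mean shift
\[
{\rm E}_Z[\hat\theta_a(X+Z)] = \hat\theta_a(X) + \tfrac12\sigma^2\,\Delta\hat\theta_a(X) + o(\sigma^2).
\]

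Next I would expand the product $\hat\theta_a(X+Z)\hat\theta_b(X+Z)$ and average over $Z$, keeping terms through order $\sigma^2$. The surviving contributions are the bilinear term $((\nabla_{\bm x}\hat\theta_a)^\top Z)((\nabla_{\bm x}\hat\theta_b)^\top Z)$, whose $Z$-mean is $\sigma^2(\nabla_{\bm x}\hat\theta_a)^\top\nabla_{\bm x}\hat\theta_b$, and the two mixed terms $\tfrac12\hat\theta_a Z^\top(\nabla_{\bm x}^2\hat\theta_b)Z$ and its $a\leftrightarrow b$ partner, which give $\tfrac12\sigma^2(\hat\theta_a\Delta\hat\theta_b+\hat\theta_b\Delta\hat\theta_a)$. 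Assembling the covariance (with expectation over both $X$ and $Z$), subtracting the product of the full first-order means, and taking the expectation over $X\sim p(\bm x,\bm\theta)$ converts the bare products into centered covariances, so that the $\sigma^2$-coefficient of ${\rm Var}_\theta[\hat{\bm\theta}(X+Z)]_{ab}$ equals
\[
{\rm E}_\theta[(\nabla_{\bm x}\hat\theta_a)^\top\nabla_{\bm x}\hat\theta_b] + \tfrac12\,{\rm Cov}_\theta[\hat\theta_a,\Delta\hat\theta_b] + \tfrac12\,{\rm Cov}_\theta[\hat\theta_b,\Delta\hat\theta_a].
\]
Recognizing the first term as ${\rm Var}^{\mathrm W}_\theta[\hat{\bm\theta}]_{ab}$ by definition \eqref{Wcov} and rearranging gives the stated identity; the special case is then immediate, since a covariance against the constant $\Delta\hat\theta_b$ vanishes.

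The main obstacle is the \emph{analytic} justification rather than the algebra: I must control the remainder so that ${\rm E}_\theta[|r_a|]=o(\sigma^2)$ holds uniformly enough to divide by $\sigma^2$ and pass to the limit, and likewise for the mixed products $r_a\hat\theta_b$. This is where integrability enters—one needs $\nabla_{\bm x}\hat\theta_a$ and $\nabla_{\bm x}^2\hat\theta_a$ to be square-integrable under $p(\bm x,\bm\theta)$, which is already the hypothesis underlying \eqref{Wcov}, and the third absolute moment of $Z$ to be $O(\sigma^3)$, as holds under the natural scaling $Z=\sigma Z_0$. The same integrability clarifies why only the first two moments of $Z$ survive in the limit: all higher-order contributions are absorbed into the $o(\sigma^2)$ remainder.
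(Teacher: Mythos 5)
Your proposal is correct and follows essentially the same route as the paper's proof: a second-order Taylor expansion in $Z$, using ${\rm E}[Z]=0$ and ${\rm E}[ZZ^\top]=\sigma^2 I$ to identify the $\sigma^2$-coefficient of ${\rm Var}_\theta[\hat{\bm\theta}(X+Z)]_{ab}$ as ${\rm Var}^{\mathrm{W}}_\theta[\hat{\bm\theta}]_{ab}$ plus the symmetrized Laplacian covariance terms. Your additional attention to controlling the cubic remainder and the required square-integrability is a refinement the paper leaves implicit, not a departure in method.
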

\begin{proof}
	By Taylor expansion, for sufficiently small $\bm{z}$,
	\begin{align*}
		\hat{{\theta}}_a (\bm{x}+\bm{z}) \approx \hat{{\theta}}_a(\bm{x}) + \sum_i \frac{\partial \hat{{\theta}}_a}{\partial x_i}(\bm{x}) z_i + \frac{1}{2} \sum_{i,j} \frac{\partial^2 \hat{{\theta}}_a}{\partial x_i \partial x_j}(\bm{x}) z_i z_j.
	\end{align*}
	From ${\rm E} [Z] = 0$, ${\rm Var} [Z] = \sigma^2 I$ and the independence of $X$ and $Z$,
	\begin{align*}
		&{\rm E}_{\theta} [\hat{{\theta}}_a (X+Z)] \\
		=& {\rm E}_{\theta} [\hat{{\theta}}_a(X)] + \sum_i {\rm E}_{\theta} \left[ \frac{\partial \hat{{\theta}}_a}{\partial x_i} (X) \right] {\rm E}[z_i] + \frac{1}{2} \sum_{i,j} {\rm E}_{\theta} \left[ \frac{\partial^2 \hat{{\theta}}_a}{\partial x_i \partial x_j} (X) \right] {\rm E} [z_i z_j] + o(\sigma^2) \\
		=& {\rm E}_{\theta} [\hat{{\theta}}_a(X)] + \frac{1}{2} {\rm E}_{\theta} [ \Delta\hat{{\theta}}_a (X) ] \sigma^2 + o(\sigma^2). 
	\end{align*}
	Also,
	\begin{align*}
		&{\rm E}_{\theta} [\hat{{\theta}}_a (X+Z) \hat{{\theta}}_b (X+Z)] \\
		=& {\rm E}_{\theta} [\hat{{\theta}}_a(X) \hat{{\theta}}_b(X)] + \frac{1}{2} {\rm E}_{\theta} [ \hat{{\theta}}_a(X) \Delta \hat{{\theta}}_b (X) + \hat{{\theta}}_b(X) \Delta \hat{{\theta}}_a (X) ] \sigma^2 + {\rm Var}_{\theta}^{\mathrm{W}} [\hat{\bm{\theta}}]_{ab} \sigma^2 + o(\sigma^2).
	\end{align*}
	Then,
	\begin{align*}
		&{\rm Var}_{\theta} [\hat{\bm{\theta}} (X+Z)]_{ab} \\
		=& {\rm E}_{\theta} [\hat{{\theta}}_a (X+Z) \hat{{\theta}}_b (X+Z)] - {\rm E}_{\theta} [\hat{{\theta}}_a (X+Z)] {\rm E}_{\theta} [\hat{{\theta}}_b (X+Z)] \\
		=& {\rm Var}_{\theta} [\hat{\bm{\theta}}(X)]_{ab} + {\rm Var}^{\mathrm{W}}_{\theta} [\hat{\bm{\theta}}]_{ab} \sigma^2 \\
		& \qquad + \frac{1}{2} \left( {\rm Cov}_{\theta} [ \hat{{\theta}}_a (X), \Delta \hat{{\theta}}_b (X) ] + {\rm Cov}_{\theta} [ \hat{{\theta}}_b (X), \Delta \hat{{\theta}}_a (X) ] \right) \sigma^2 + o(\sigma^2),
	\end{align*}
	where the covariance term vanishes when $\hat{\bm{\theta}}$ is quadratic in $\bm{x}$.
\end{proof}

For the elliptically symmetric deformation model \eqref{eq:am61}, from Theorem~\ref{th_cov} and Corollary~\ref{cor_ellip}, the Wasserstein covariance quantifies the robustness of the Wasserstein estimator and the Wasserstein--Cramer--Rao inequality gives its limit.
It is an interesting future problem to investigate when the Wasserstein estimator attains the Wasserstein efficiency.
Note that the Fisher efficiency (in finite samples), which is defined by the usual Cramer--Rao inequality, is attained by the maximum likelihood estimator if and only if the estimand is the expectation parameter of an exponential family.

\section{Conclusion}
\label{sec:conc}

Statistical inference based on the likelihood principle enjoyed great successes, and information geometry has played an important role in it.  However, Wasserstein divergence gives another viewpoint, which is based on the geometric structure of the sample space $X$.  There are many applications of the Wasserstein geometry not only to the transportation problem but to computer vision and image analysis and recently deep learning in AI.

We studied the Wasserstein statistics  using the framework of \cite{LZ2019}, proving that the Wasserstein covariance  quantifies robustness against the convolutional waveform deformation due to observation noise.  
We further studied $W$-statistics of the affine deformation model.
We showed $F$-efficiency and $W$-efficiency of estimators $\hat{\bm{\theta}}_F$ and $\hat{\bm{\theta}}_W$.   We elucidated how the waveform $f$ contributes to the efficiencies.  The Gaussian distribution gives the only waveform in which the $F$-estimator and $W$-estimator coincide.

Estimation of a covariance matrix, especially in high dimensions under special structure (e.g. low-rankness, sparsity), has been an important problem in statistics. 
It is an interesting future problem to investigate whether the Wasserstein geometry is helpful in covariance estimation.

Other than the elliptically symmetric deformation model, it is difficult in general to derive the Wasserstein score, which corresponds to the infinitesimal optimal transport. It is an interesting future problem to explore other statistical models for which the Wasserstein score is obtained in closed form. Also, it would be useful to develop approximation techniques for the Wasserstein score.

The present paper is only a first step to construct general Wasserstein statistics.  In future work, we need to use more general statistical models. 
We also need to extend our approach to statistical theories of hypothesis testing, pattern classification, clustering and many other statistical problems based on the Wasserstein geometry.

\section*{Acknowledgements}
We thank the referees for constructive comments.
We thank Asuka Takatsu and Tomonari Sei for helpful comments.
We thank Emi Namioka for drawing the figures.
Takeru Matsuda was supported by JSPS KAKENHI Grant Numbers 19K20220, 21H05205, 22K17865 and JST Moonshot Grant Number JPMJMS2024.

\appendix

	\section{Proof of Wasserstein--Cramer--Rao inequality}
	For random vectors $U$ and $V$, 
	\begin{align*}
		0 \leq {\rm E} \| tU+V \|^2 = {\rm E} [\| U \|^2] t^2 + 2 {\rm E} [U^{\top} V] t + {\rm E} [\| V \|^2]
	\end{align*}
	for every $t$.
	Thus, by considering the discriminant of the quadratic equation,
	\begin{align}
		{\rm E} [U^{\top} V]^2 \leq {\rm E} [\| U \|^2] {\rm E} [\| V \|^2]. \label{CS}
	\end{align}
	Substituting 
	\begin{align*}
		U = \sum_i a_i \nabla_{\bm{x}} \hat{\theta}_i, \quad V = \sum_j b_j \nabla_{\bm{x}} S_j^W
	\end{align*}
	into \eqref{CS} yields
	\begin{align}
		\left( \sum_{i,j} a_i b_j {\rm E}_{\theta} [(\nabla_{\bm{x}} \hat{\theta}_i)^{\top} (\nabla_{\bm{x}} S_j^W)] \right)^2 \leq {\rm E}_{\theta} \left[ \left\| \sum_i a_i \nabla_{\bm{x}} \hat{\theta}_i \right\|^2 \right] {\rm E}_{\theta} \left[ \left\| \sum_j b_j \nabla_{\bm{x}} S_j^W \right\|^2 \right]. \label{CS2}
	\end{align}
	
	From the property \eqref{eq:poisson} of the Wasserstein score function,
	\begin{align*}
		\frac{\partial}{\partial \theta_j} {\rm E}_{\theta} [\hat{\theta}_i] &= \int \hat{\theta}_i(x) \frac{\partial}{\partial \theta_j} p(x, \theta) {\rm d} x \\
		&= -\int \hat{\theta}_i(x) \nabla_{\bm{x}} \cdot (p(x, \theta) \nabla_{\bm{x}} S_j^W(x, \theta)) {\rm d} x \\
		&= -\int (\nabla_{\bm{x}} \cdot (\hat{\theta}_i(x) p(x, \theta) \nabla_{\bm{x}} S_j^W(x, \theta)) - (\nabla_{\bm{x}} \hat{\theta}_i(x))^{\top} (\nabla_{\bm{x}} S_j^W(x, \theta)) p(x, \theta) ) {\rm d} x \\
		&= {\rm E}_{\theta} [(\nabla_{\bm{x}} \hat{\theta}_i)^{\top} (\nabla_{\bm{x}} S_j^W)],
	\end{align*}
	which yields
	\begin{align}
		\sum_{i,j} a_i b_j {\rm E}_{\theta} [(\nabla_{\bm{x}} \hat{\theta}_i)^{\top} (\nabla_{\bm{x}} S_j^W)] = \sum_{i,j} a_i b_j \frac{\partial}{\partial \theta_j} {\rm E}_{\theta} [\hat{\theta}_i] = \bm{a}^{\top} \left( \frac{\partial}{\partial \theta} {\rm E}_{\theta} [\hat{\theta}] \right) \bm{b}. \label{App1}
	\end{align}
	From the definition of the Wasserstein covariance \eqref{Wcov},
	\begin{align}
		{\rm E}_{\theta} \left[ \left\| \sum_i a_i \nabla_{\bm{x}} \hat{\theta}_i \right\|^2 \right] &= {\rm E}_{\theta} \left[ \sum_{i,j} a_i a_j (\nabla_{\bm{x}} \hat{\theta}_i)^{\top} (\nabla_{\bm{x}} \hat{\theta}_j) \right] \nonumber \\
		&= \sum_{i,j} a_i a_j {\rm Var}_{\theta}^{\mathrm{W}} (\hat{\theta})_{ij} \nonumber \\
		&= \bm{a}^{\top} {\rm Var}_{\theta}^{\mathrm{W}} (\hat{\theta}) \bm{a}. \label{App2}
	\end{align}
	From the definition of the Wasserstein information matrix \eqref{WIM},
	\begin{align}
		{\rm E}_{\theta} \left[ \left\| \sum_j b_j \nabla_{\bm{x}} S_j^W \right\|^2 \right] &= {\rm E}_{\theta} \left[ \sum_{i,j} b_i b_j (\nabla_{\bm{x}} S_i^W)^{\top} (\nabla_{\bm{x}} S_j^W) \right] \nonumber \\ 
		&= \sum_{i,j} b_i b_j G_{\mathrm{W}} (\theta)_{ij} \nonumber \\
		&= \bm{b}^{\top} G_{\mathrm{W}} (\theta) \bm{b}. \label{App3}
	\end{align}
	
	Substituting \eqref{App1}, \eqref{App2} and \eqref{App3} into \eqref{CS2}, we obtain
	\begin{align*}
		\left( \bm{a}^{\top} \left( \frac{\partial}{\partial \theta} {\rm E}_{\theta} [\hat{\theta}] \right) \bm{b} \right)^2 \leq \bm{a}^{\top} {\rm Var}_{\theta}^{\mathrm{W}} (\hat{\theta}) \bm{a} \cdot \bm{b}^{\top} G_{\mathrm{W}} (\theta) \bm{b}.
	\end{align*}
	By putting 
	\begin{align*}
		\bm{b}=G_{\mathrm{W}} (\theta)^{-1} \left( \frac{\partial}{\partial \theta} {\rm E}_{\theta} [\hat{\theta}] \right) \bm{a},
	\end{align*}
	it leads to 
	\begin{align*}
		\bm{a}^{\top} \left( \frac{\partial}{\partial \theta} {\rm E}_{\theta} [\hat{\theta}] \right) G_{\mathrm{W}} (\theta)^{-1} \left( \frac{\partial}{\partial \theta} {\rm E}_{\theta} [\hat{\theta}] \right) \bm{a} \leq \bm{a}^{\top} {\rm Var}_{\theta}^{\mathrm{W}} (\hat{\theta}) \bm{a},
	\end{align*}
	which is equal to the Wasserstein Cramer--Rao inequality \eqref{eq:WCR}.

\end{document}